\theoremstyle{definition}{

\newtheorem{Rem}{{\rm Remark}}

}
\theoremstyle{plain}
{

\newtheorem{Prop}{Proposition}
\newtheorem{Thm}{Theorem}
\newtheorem{MThm}{Main Theorem}

}
\begin{document}
\title[Characterizing some $6$-dimensional manifolds via special generic maps]{Characterizing certain classes of $6$-dimensional closed and simply-connected manifolds via special generic maps}
\author{Naoki Kitazawa}
\keywords{Special generic maps. (Co)homology. 6-dimensional (closed and simply-connected) manifolds. \\
\indent {\it \textup{2020} Mathematics Subject Classification}: Primary~57R45. Secondary~57R19.}
\address{Institute of Mathematics for Industry, Kyushu University, 744 Motooka, Nishi-ku Fukuoka 819-0395, Japan\\
 TEL (Office): +81-92-802-4402 \\
 FAX (Office): +81-92-802-4405 \\
}
\email{n-kitazawa@imi.kyushu-u.ac.jp}
\urladdr{https://naokikitazawa.github.io/NaokiKitazawa.html}

\begin{abstract}
The present paper finds new necessary and sufficient conditions for $6$-dimensional closed and simply-connected manifolds of certain classes to admit {\it special generic} maps into certain Euclidean spaces.

The class of {\it special generic} maps naturally contains 
Morse functions with exactly two singular points on spheres in so-called Reeb's theorem, characterizing spheres topologically, and canonical projections of unit spheres. Our paper concerns variants of Reeb's theorem. Several results are known e. g. the cases where the manifolds of the targets are the plane and some cases where the manifolds of the domains are closed and simply-connected. Our paper concerns $6$-dimensional versions of a result of Nishioka, determining $5$-dimensional closed and simply-connected manifolds admitting special generic maps into Euclidean spaces completely. Closed and simply-connected manifolds are central geometric objects in (classical) algebraic topology and differential topology. The $6$-dimensional case is more complicated than the $5$-dimensional one: they are classified via explicit algebraic systems.

%  For example, we investigate algebraic topological restrictions on {\it Reeb spaces} of the fold maps. %They are essential tools in studying manifolds by using generic maps, and the source manfolds. We also %show flexibility of homology groups of the Reeb spaces and the source manifolds, for example.    

% Abstract text, usually no more than 200 words.
% Avoid bibliographic references (\cite) and complicated mathematics.
% Please do not use custom macros here, as this abstract has to 
% be able to stand alone.  You may use standard tex/latex/AMS macros.
\end{abstract}

% Leave these items like this, and the journal will fill them in.
% \received{Month Day, Year}   % receive date (for example: October 11, 1999)
% \revised{Month Day, Year}    % date of revision; omit, if no revision;
%                             % if multiple revisions, separate by commas
% \published{Month Day, Year}  % publish date\submitted{Bill Murray}      % Name of Journal's Editor, 
% who handled Article 
% \volumeyear{2014} % Volume Year
% \volumenumber{16} % Volume Number 
% \issuenumber{2}   % Issue Number
% \startpage{1}     % PageNumber of first page
% \articlenumber{1} % Sequence number of article within issue
% If copyright is retained by author, comment this out:
% \owner{International Press}

\maketitle
\section{Introduction.}
\label{sec:1}
  
Morse functions on spheres with exactly two singular points are central objects in Reeb's theorem: a closed manifold admits such a function if and only if it is homeomorphic to a $k$-dimensional sphere for $k \neq 4$ or diffeomorphic to the $4$-dimensional unit sphere.
For general theory of Morse functions including these functions, see also \cite{milnor,milnor2}.
  
The class of special generic maps is a certain class of smooth maps whose codimensions are not positive and this class contains these functions and canonical projections of unit spheres as simplest examples.

More rigorously, a smooth map from an $m$-dimensional smooth manifold with no boundary into an $n$-dimensional manifold with no boundary is a special generic map if at each {\it singular} point
it is represented by the form\\
$(x_1, \cdots, x_m) \mapsto (x_1,\cdots,x_{n-1},\sum_{j=1}^{m-n+1} {x_{j+n-1}}^2)$ ($m \geq n \geq 1$)
for suitable local coordinates: a {\it singular} point of a differentiable map means a point in the manifold of the domain where the rank of the differential is smaller than both the dimensions of the manifolds of the domains and the targets.

Note also that special generic maps are so-called {\it fold maps}. Morse functions are also fold maps. Related theory on singularities of differentiable functions and maps are presented systematically in \cite{golubitskyguillemin} for example.

 \cite{burletderham,calabi,furuyaporto} are pioneering studies on special generic maps. Since the 1990s, \cite{saeki} and related studies such as \cite{nishioka,saeki2,sakuma,sakuma2,saekisakuma,saekisakuma2,wrazidlo,wrazidlo2} have discovered interesting phenomena closely related to algebraic topology and differential topology of manifolds.

Reeb's theorem is a kind of characterization theorems of certain classes of (closed) manifolds. The present paper is on variants of this theorem. 

${\mathbb{R}}^k$ denotes the $k$-dimensional Euclidean space (${\mathbb{R}}^1$ is denoted by $\mathbb{R}$ usually). It is a smooth manifold and it admits a natural Riemannian metric: the {\it standard Euclidean} metric. For $x \in {\mathbb{R}}^k$, $||x|| \geq 0$ denotes the distance between $x$ and the origin $0 \in {\mathbb{R}}^k$ where the underlying metric is the standard Euclidean metric. $S^k:=\{x \in {\mathbb{R}}^{k+1}\mid ||x||=1\}$ is the $k$-dimensional unit sphere and a $k$-dimensional compact smooth closed submanifold with no boundary in ${\mathbb{R}}^{k+1}$. $D^k:=\{x \in {\mathbb{R}}^{k}\mid ||x|| \leq 1\}$ is the $k$-dimensional unit disk and a $k$-dimensional connected and compact smooth submanifold in ${\mathbb{R}}^{k}$. A {\it homotopy sphere} means a smooth manifold which is homeomorphic to a (unit) sphere. If it is diffeomorphic to a unit sphere, then it is said to be a {\it standard} sphere. Known smooth manifolds homeomorphic to unit disks are in fact diffeomorphic to them. 

Here a {\it smooth} bundle means a bundle whose fiber is a smooth manifold and whose structure group consists of smooth diffeomorphisms.
A {\it linear} bundle means a smooth bundle whose fiber is a Euclidean space, unit disk, or a unit sphere and whose structure group consists of (natural) linear transformations. For general theory of linear bundles and general bundles, see \cite{milnorstasheff,steenrod} for example.
Connected sums and boundary connected sums of manifolds are considered in the smooth category throughout the present paper.

We introduce some known results and Main Theorems.
\begin{Thm}
	\label{thm:1}
	Let $m \geq n \geq 1$ be integers.
	\begin{enumerate}
		\item {\rm (\cite{saeki})}
		\label{thm:1.1}
		Let $m \geq 2$. An $m$-dimensional closed and connected manifold admits a special generic map into ${\mathbb{R}}^2$ if and only if either of the following two holds.
		\begin{enumerate}
			\item $M$ is a homotopy sphere where $m \neq 4$ or a standard sphere where $m=4$.
			\item $M$ is diffeomorphic to a manifold represented as a connected sum of the total spaces of smooth bundles over $S^1$ whose fibers are either of the following two.
			\begin{enumerate}
				\item An {\rm (}$m-1${\rm )}-dimensional homotopy sphere where $m \neq 5$.
				\item A $4$-dimensional standard sphere where $m=5$.
				\end{enumerate}
		\end{enumerate}
		\item {\rm (\cite{saeki})}
		\label{thm:1.2}
		Let $m=4,5,6$. An $m$-dimensional closed and simply-connected manifold admits a special generic map into ${\mathbb{R}}^3$ if and only if either of the following two holds.
		\begin{enumerate}
			\item $M$ is an $m$-dimensional standard sphere.
			\item $M$ is diffeomorphic to a manifold represented as a connected sum of the total spaces of linear bundles over $S^2$ whose fibers are diffeomorphic to the {\rm (}$m-2${\rm )}-dimensional unit sphere.
		\end{enumerate}
		\item {\rm (\cite{nishioka})}
		\label{thm:1.3}
		Let $m=5$. An $m$-dimensional closed and simply-connected manifold admits a special generic map into ${\mathbb{R}}^4$ if and only if either of the following two holds.
		\begin{enumerate}
			\item $M$ is a $5$-dimensional standard sphere.
			\item $M$ is diffeomorphic to a manifold represented as a connected sum of the total spaces of linear bundles over $S^2$ whose fibers are diffeomorphic to the $3$-dimensional unit sphere.
		\end{enumerate}
	\end{enumerate}
\end{Thm}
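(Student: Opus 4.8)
\section*{Proof proposal for Theorem \ref{thm:1}}

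The plan is to treat all three parts through the common machinery of the Reeb space (quotient space) of a special generic map, as developed in the foundational works cited in the introduction (\cite{burletderham,saeki}). For a special generic map $f \colon M^m \to \mathbb{R}^n$ I would first recall that the quotient $W_f$ of $M$ by the connected components of the fibers is a smooth compact $n$-manifold with boundary, that $f$ factors as $f = \bar{f} \circ q_f$ through an immersion $\bar{f} \colon W_f \to \mathbb{R}^n$, and that $q_f \colon M \to W_f$ restricts to a linear $S^{m-n}$-bundle over $\operatorname{int} W_f$ and to a diffeomorphism $S(f) \to \partial W_f$. The central reconstruction statement I would use in both directions is that $M$ is diffeomorphic to $\partial E$, where $E$ is the total space of a linear $D^{m-n+1}$-bundle over $W_f$; conversely, any compact $n$-manifold $W$ with boundary that immerses into $\mathbb{R}^n$, equipped with a linear $D^{m-n+1}$-bundle $E \to W$, produces a special generic map on $\partial E$. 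Thus each part reduces to (i) determining which $W$ can occur under the stated dimension and connectivity hypotheses, and (ii) translating a handle / boundary-connected-sum decomposition of $W$ into a connected-sum decomposition of $M = \partial E$.

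For step (ii) I would isolate one model computation. When $W$ contains a summand that is a linear $D^{n-2}$-bundle over $S^2$ (for $n = 3, 4$), the corresponding summand of $M$ is the double of a linear $D^{m-2}$-bundle $\zeta$ over $S^2$; since the double of a disk bundle $D(\zeta)$ is the sphere bundle $S(\zeta \oplus \varepsilon^1)$, this summand is a \emph{linear} $(m-2)$-sphere bundle over $S^2$, exactly as required in (2)(b) and (3)(b). When $n = 2$ and $W$ contains an annular handle $S^1 \times D^1$, the same doubling produces a smooth $(m-1)$-sphere bundle over $S^1$, giving the summands of (1)(b). In every case a $D^n$-summand of $W$ contributes the sphere (or homotopy-sphere) alternative (1)(a), (2)(a), (3)(a), and the case $W = D^n$ produces only that alternative.

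It remains to pin down $W$ in each case. For part (1), $n = 2$: an immersion into the oriented plane forces $W_f$ to be orientable, and a compact orientable surface with boundary is a boundary connected sum of $D^2$ and copies of $S^1 \times D^1$ coming from a handle decomposition with handles of index $\le 1$; feeding this into the model computation yields (1). For part (2), $n = 3$ with $M$ simply-connected: the fiber $S^{m-n}$ is simply-connected for $m = 5,6$, and the codimension-one case $m = 4$ is controlled by the Burlet--de Rham theory, so $q_f$ forces $W_f$ to be simply-connected; being immersible in $\mathbb{R}^3$ it is orientable, hence its boundary components are $2$-spheres, and capping off and invoking the Poincar\'e conjecture identifies $W_f$ with $S^3$ punctured in finitely many balls, i.e. a boundary connected sum of $D^3$ and copies of $S^2 \times D^1$; this yields (2). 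For part (3), $n = 4$, $m = 5$: here $W_f$ is a simply-connected compact $4$-manifold immersing into $\mathbb{R}^4$, hence parallelizable, and one must show (this is Nishioka's core analysis \cite{nishioka}) that such a $W_f$ with homotopy-sphere boundary is a boundary connected sum of $D^4$ and parallelizable $D^2$-bundles over $S^2$; the model computation then returns linear $S^3$-bundles over $S^2$ as summands, yielding (3).

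The hard part will be this last case. In codimension one the clean ``$q_f$ is a $\pi_1$-isomorphism'' shortcut is weaker, so controlling $\pi_1(W_f)$ and, more seriously, reducing a simply-connected parallelizable $4$-dimensional Reeb space to the boundary-connected-sum normal form requires genuine $4$-manifold topology rather than formal bundle theory; this is precisely where Nishioka's argument does the real work and where I expect the bulk of a complete proof to lie. The remaining delicate points are the smoothing-theoretic ones flagged in the statement --- the appearance of homotopy versus standard spheres, and of smooth versus linear bundle summands. I would settle these using the groups $\Theta_k$ of homotopy spheres together with the dimension-specific facts that force standardness exactly in the excluded dimensions ($m = 4$ throughout (1), and the $S^4$-fiber case $m = 5$ in (1)(b)), and the verification that the linear structure on the sphere-bundle summands is forced in the ranges where the relevant homotopy of the diffeomorphism group agrees with that of the orthogonal group.
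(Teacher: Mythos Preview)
The paper does not prove Theorem~\ref{thm:1}: it is stated in the introduction as a known result, attributed to \cite{saeki} for parts (\ref{thm:1.1}) and (\ref{thm:1.2}) and to \cite{nishioka} for part (\ref{thm:1.3}), and no argument for it appears anywhere in the paper. So there is no ``paper's own proof'' to compare your proposal against; the relevant arguments live in the cited references.

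That said, your outline is broadly in the right spirit and matches how the paper itself packages the background in Propositions~\ref{prop:2}--\ref{prop:4}. One point to tighten: you assert that $M$ is always $\partial E$ for a \emph{linear} $D^{m-n+1}$-bundle $E \to W_f$. The paper's Proposition~\ref{prop:4} is more careful---the bundle $r_f \colon W \to W_f$ with fiber $D^{m-n+1}$ need not be smooth in general, and what is actually recorded is a boundary linear disk bundle over $\partial W_f$ together with an internal smooth sphere bundle over the complement, compatibly glued. Smoothness of the disk bundle is only asserted for $m-n \le 3$ (Proposition~\ref{prop:4} (\ref{prop:4.5})), which does cover parts (\ref{thm:1.2}) and (\ref{thm:1.3}) but not part (\ref{thm:1.1}) in all dimensions, and ``linear'' is a further step you would need to justify. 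Your ``double of a disk bundle is a sphere bundle'' computation for the summands is correct once the bundle is linear, but you should not take that linearity for granted at the outset. Beyond this, your identification of the genuine difficulty---Nishioka's $4$-dimensional analysis of $W_f$ in part (\ref{thm:1.3})---is accurate.
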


\begin{MThm}
	\label{mthm:1}
	A $6$-dimensional closed and simply-connected manifold admits a special generic map into ${\mathbb{R}}^4$ if and only if $M$ is either of the following two.
	\begin{enumerate}
		\item A $6$-dimensional standard sphere.
						\item A manifold diffeomorphic to one represented as a connected sum of finitely many copies of the following manifolds.
						\begin{enumerate}
							\item $S^3 \times S^3$.
							\item The total space of a linear bundle over $S^2$ whose fiber is diffeomorphic to the $4$-dimensional unit sphere.
						\end{enumerate}
						
					\end{enumerate}

	\end{MThm}
For our further new results, we also need fundamental notions on algebraic topology such as homology groups, cohomology groups and cohomology rings. The fundamental terminologies and notions on algebraic topology will be reviewed in section \ref{sec:2} where we regard readers have related knowledge to some extent.

$\mathbb{Z} \subset \mathbb{R}$ denotes the ring of all integers. The notation on homology groups and cohomology groups is presented again later. The {\it singular set} of a differentiable map is the set of all singular points of the map. Propositions \ref{prop:1}, \ref{prop:2} and \ref{prop:3} explain about fundamental properties including properties on the singular sets of special generic maps.

	As Theorem \ref{thm:2} shows, a special generic map $f:M \rightarrow {\mathbb{R}}^5$ on a $6$-dimensional closed and simply-connected manifold $M$ is represented as the composition of a smooth submersion $q_f$ onto a $5$-dimensional compact and simply-connected smooth manifold $W_f$ with a smooth immersion $\bar{f}:W_f \rightarrow {\mathbb{R}}^5$. Furthermore, as Proposition \ref{prop:2} (Proposition \ref{prop:3}) shows, the restriction of the map to the singular set is a diffeomorphism onto the boundary $\partial W_f$.
\begin{MThm}
	\label{mthm:2}	
	A $6$-dimensional closed and simply-connected manifold $M$ admits a special generic map $f:M \rightarrow {\mathbb{R}}^5$ such that the 2nd homology group $H_2(W_f;\mathbb{Z})$ of the $5$-dimensional compact and simply-connected smooth manifold $W_f$ just before is trivial if and if $M$ is either of the following two.
	\begin{enumerate}
		\item A $6$-dimensional standard sphere.
		\item A manifold diffeomorphic to one represented as a connected sum of finitely many copies of the following manifolds.
		\begin{enumerate}
			\item $S^3 \times S^3$.
			\item $S^2 \times S^4$.
		\end{enumerate}
		
	\end{enumerate}

\end{MThm}

We also need some notions on characteristic classes and obstruction theory for linear bundles.
For {\it j-th Stiefel-Whitney classes} and {\it Pontrjagin classes} of (real) vector bundles, linear bundles, tangent bundles, normal bundles of submanifolds and smooth manifolds, for example, see \cite{milnorstasheff} as one of related well-known books. This presents related systematic expositions. $\mathbb{Z}/2\mathbb{Z}$ is the field of order $2$.
\begin{MThm}
	\label{mthm:3}
A $6$-dimensional, closed and simply-connected manifold $M$ such that the $2$nd homology group $H_2(M;\mathbb{Z})$ is finite and has no elements which are not the identity elements and whose orders are finite and even admits a special generic map into ${\mathbb{R}}^5$ whose singular set is connected if and only if the following three hold.
\begin{enumerate}
	\item \label{mthm:3.1} The 2nd Stiefel Whitney class of $M$, which is the uniquely defined element of the cohomology group $H^2(M;\mathbb{Z}/2\mathbb{Z})$, is the zero element.
	\item \label{mthm:3.2} The 1st Pontrjagin class of $M$, which is the the uniquely defined element of the cohomology group $H^4(M;\mathbb{Z})$ for an arbitrary oriented manifold $M$, is the zero element.
	\item \label{mthm:3.3} The cup product $c_1 \cup c_2$ is the zero element for any pair $c_1,c_2 \in H^2(M;A)$ of cohomology classes in the cohomology group $H^2(M;A)$ for $A:=\mathbb{Z}$ and any finite field $A$.	
\end{enumerate}

\end{MThm}
These results such as Main Theorems \ref{mthm:1}, \ref{mthm:2} and \ref{mthm:3} can be regarded as $6$-dimensional variants of Theorem \ref{thm:1} (\ref{thm:1.3}), for example. Note that for example, in some of Theorem \ref{thm:1}, explicit classifications of closed and simply-connected manifolds of certain classes such as \cite{barden,wall} are key ingredients.
%【REVISE:just after the first sub】
%Main Theorem \ref{mthm:1} and Main Theorem \ref{mthm:2} (\ref{mthm:2.1}) → Main Theorem \ref{mthm:1}, Main Theorem \ref{mthm:2} (\ref{mthm:2.1}) and Main Corollary \ref{mcor:1}
 In our new study, \cite{jupp,wall,zhubr,zhubr2} are key results on classifications of such manifolds.

%Theorem \ref{thm:3}, Theorem \ref{thm:4}, Theorem\ref{thm:5}, which is also a slightly improved version of a main theorem of \cite{kitazawa4,kitazawa5}, and their proofs are partially regarded as our new results. They are keys in Main Theorems \ref{mthm:2} and \ref{mthm:3}.

We have another main theorem. 

\begin{MThm}
\label{mthm:4}
\begin{enumerate}
	\item 
	\label{mthm:4.1}
	Assume that a closed and simply-connected manifold $M$ of dimension $m \geq 5$ admits a special generic map into ${\mathbb{R}}^4$, then $M$ enjoys the following two properties.
\begin{enumerate}
	\item \label{thm:4.1.1} The $j$-th Steifel-Whitney class of $M$, which is the uniquely defined element of $H^j(M;\mathbb{Z}/2\mathbb{Z})$, is the zero element for any integer $j \geq 0$ except $j=2$.
	\item \label{thm:4.1.2} The $j$-th Pontrjagin class of $M$, which is the uniquely defined element of $H^{4j}(M;\mathbb{Z})$, is the zero element for any integer $j \geq 0$ and an arbitrary oriented $M$.
	 \end{enumerate}
 \item \label{mthm:4.2}
 Let $M^{7,0}$ be a $7$-dimensional closed and simply-connected manifold diffeomorphic to a manifold represented as a connected sum of finitely many manifolds in the following two. 
 \begin{itemize}
 	\item The total space of a linear bundle over $S^2$ whose fiber is the $5$-dimensional unit sphere.
 	\item A copy of $S^3 \times S^4$.
 \end{itemize}
Furthermore, the family of the finitely many manifolds contains at least one copy of $S^3 \times S^4$. Then there exists a family $\{M^{7,\lambda}\}_{\lambda \in \Lambda}$ of countably many $7$-dimensional closed and simply-connected manifolds satisfying the following two.
\begin{enumerate}
	\item \label{mthm:4.2.1} $M^{7,{\lambda}_1}$ and $M^{7,{\lambda}_2}$ are not homeomorphic for distinct elements ${\lambda}_1, {\lambda}_2 \in \Lambda$.
	\item \label{mthm:4.2.2} There exists an isomorphism ${\phi}_{\lambda}$ from the cohomology ring $H^{\ast}(M^{7,0};\mathbb{Z})$ of $M^{7,0}$ onto the cohomology ring $H^{\ast}(M^{7,\lambda};\mathbb{Z})$ of $M^{7,\lambda}$.
	\item \label{mthm:4.2.3} For ${\phi}_{\lambda}$ before, by considering the natural quotient map from $\mathbb{Z}$ to $\mathbb{Z}/2\mathbb{Z}$, we canonically obtain an isomorphism ${\phi}_{\lambda,2}$ from the cohomology ring $H^{\ast}(M^{7,0};\mathbb{Z}/2\mathbb{Z})$ onto the cohomology ring $H^{\ast}(M^{7,\lambda};\mathbb{Z}/2\mathbb{Z})$. This maps the $j$-th Stiefel Whitney class of $M^{7,0}$ to that of $M^{7,\lambda}${\rm :} the $j$-th Stiefel-Whitney classes are defined uniquely as the elements of the cohomology groups as before, of course. 
	\item \label{mthm:4.2.4} $M^{7,\lambda}$ does not admit special generic maps into ${\mathbb{R}}^4$, whereas $M^{7,0}$ admits ones. $M^{7,\lambda}$ and $M^{7,0}$ admit special generic maps into ${\mathbb{R}}^5$.
\end{enumerate} 
\end{enumerate} 
\end{MThm}

This is closely related to Main Theorem \ref{mthm:1}. In addition, this extends some results of section 3 of \cite{saeki}.

Our new results and related facts explicitly show that special generic maps are attractive in algebraic topology and differential topology of manifolds although the class of special generic maps seems to be not so wide considering the definition. 

In the next section we review fundamental properties of special generic maps. The third section is devoted to Main Theorems and Theorems \ref{thm:3}, \ref{thm:4} and \ref{thm:5}.
The fourth section is for concluding remarks.

\section{Fundamental properties of special generic maps.}
\label{sec:2}
\begin{Prop}
\label{prop:1}
Let $f$ be a special generic map from an $m$-dimensional manifold with no boundary into $n$-dimensional manifold with no boundary where $m \geq n$. Then the following properties hold.
\begin{enumerate}
\item \label{prop:1.1}
 The {\rm singular set} of $f$, defined as the set of all singular points of $f$, is an {\rm (}$n-1${\rm )}-dimensional smooth closed submanifold with no boundary. Furthermore, the restriction of $f$ there is a smooth immersion.
\item \label{prop:1.2}
 $f$ is, for suitable local coordinates, represented as the product map of a Morse function and the identity map on a small open neighborhood of each singular point where the open neighborhood is taken in the singular set and of dimension $n-1$.
\end{enumerate}
\end{Prop}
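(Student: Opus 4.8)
\emph{Sketch of the intended argument.} The plan is to deduce both assertions directly from the local normal form that defines a special generic map, using nothing global about $M$ beyond the hypothesis that its boundary is empty. Fix a singular point $p$ of $f$. By definition there are local coordinates $(x_1,\dots,x_m)$ on a neighbourhood $U$ of $p$, centred at $p$, and local coordinates on a neighbourhood of $f(p)$, centred at $f(p)$ and regarded as lying in ${\mathbb{R}}^{n-1}\times{\mathbb{R}}$, in which $f$ is
\[
(x_1,\dots,x_m)\ \longmapsto\ \Bigl(x_1,\dots,x_{n-1},\ \sum_{j=1}^{m-n+1}{x_{j+n-1}}^2\Bigr).
\]
Write $k:=m-n+1\ (\geq 1)$ and split $U=V\times V'$ with $V\subset{\mathbb{R}}^{n-1}$ open (coordinates $x_1,\dots,x_{n-1}$) and $V'\subset{\mathbb{R}}^{k}$ open (coordinates $x_n,\dots,x_m$).

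For part (\ref{prop:1.1}) I would first compute the Jacobian of $f$ in these coordinates: it is block diagonal, with an $(n-1)\times(n-1)$ identity block and a $1\times k$ block equal to $2(x_n,\dots,x_m)$. Hence, at a point of $U$, the rank of $df$ is $n$ if $(x_n,\dots,x_m)\neq 0$ and is $n-1$ if $(x_n,\dots,x_m)=0$. Since $m\geq n$, a point is singular precisely when this rank is $<n$, so the singular set meets $U$ exactly in $V\times\{0\}$, an $(n-1)$-dimensional submanifold with empty boundary (this uses only that $V$ is open in ${\mathbb{R}}^{n-1}$, i.e. that $M$ has no boundary). Because the singular set is intrinsically the locus where $df$ fails to have rank $n$, these chartwise descriptions are automatically compatible, so the singular set is a smooth $(n-1)$-dimensional submanifold of $M$ without boundary; it is closed as a subset of $M$ since being a submersion is an open condition, i.e. the rank of $df$ is lower semicontinuous. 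Restricting the displayed expression to $V\times\{0\}$ gives $(x_1,\dots,x_{n-1})\mapsto(x_1,\dots,x_{n-1},0)$, whose differential has rank $n-1$ everywhere; therefore $f$ restricted to the singular set is a local embedding at each point, in particular a smooth immersion.

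For part (\ref{prop:1.2}) I would reinterpret the same normal form: under the identifications above, $f=\mathrm{id}_V\times g$ on $U=V\times V'$, where $g\colon V'\to{\mathbb{R}}$, $g(x_n,\dots,x_m)=\sum_{j=1}^{k}{x_{j+n-1}}^2$, has gradient $2(x_n,\dots,x_m)$ and Hessian $2I_k$, hence is a Morse function with a single nondegenerate critical point (of index $0$) at the origin. By the rank computation in (\ref{prop:1.1}), $V$ is exactly an $(n-1)$-dimensional coordinate neighbourhood of $p$ inside the singular set; this is precisely the claimed presentation of $f$ near $p$ as the product of a Morse function with the identity map on an $(n-1)$-dimensional open neighbourhood in the singular set.

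The whole argument is bookkeeping around the definition, so I do not expect a serious obstacle; the two places deserving a word of care are the patching of the chartwise pictures into one global submanifold — handled by the coordinate-free description of the singular set as $\{\mathrm{rank}\,df<n\}$ together with lower semicontinuity of the rank — and the reading of ``closed submanifold with no boundary'' as ``a submanifold that is closed as a subset and has empty boundary'' (which is exactly what the computation yields, and which agrees with ``compact and without boundary'' in the case, relevant elsewhere in the paper, where $M$ itself is closed).
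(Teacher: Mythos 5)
Your proposal is correct and is essentially the argument the paper intends: Proposition \ref{prop:1} is stated without proof as an immediate consequence of the defining local normal form, and your chartwise rank computation, the identification of the singular locus with $V\times\{0\}$, and the reading of the normal form as $\mathrm{id}_V\times g$ with $g$ a Morse function is exactly the standard verification. The two points you flag for care (patching via the coordinate-free description $\{\mathrm{rank}\,df<n\}$ and the interpretation of ``closed'') are handled appropriately.
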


\begin{Prop}[E. g. \cite{saeki}]
\label{prop:2}
Let $m \geq n \geq 1$ be integers.
For a special generic map $f:M \rightarrow N$ on an $m$-dimensional closed manifold $M$ into an $n$-dimensional manifold $N$ with no boundary, the following properties hold.
\begin{enumerate}
\item
\label{prop:2.1}
 There exists some $n$-dimensional compact smooth manifold $W_f$ and some smooth immersion $\bar{f}:W_f \rightarrow N$. Furthermore, for example, $W_f$ can be taken as follows.
 \begin{enumerate}
 	\item If the manifold $N$ of the target is orientable, then $W_f$ is taken as an orientable manifold. 
 	\item If the manifold $M$ of the domain is connected, then $W_f$ is taken as a connected manifold. 
 	\item If the manifold $N$ of the target is orientable and the manifold $M$ of the domain is connected, then $W_f$ is taken as a connected and orientable manifold.
 	\end{enumerate}
\item
\label{prop:2.2}
 We have a smooth surjection $q_f:M \rightarrow W_f$ with the relation $f=\bar{f} \circ q_f$.
\item
\label{prop:2.3}
 $q_f$ maps the singular set of $f$ onto the boundary $\partial W_f \subset W_f$. The restriction of $q_f$ there is also regarded as a diffeomorphism onto $\partial W_f$.
\item
\label{prop:2.4}
\begin{enumerate}
\item
\label{prop:2.4.1}
 For some small collar neighborhood $N(\partial W_f) \subset W_f$, the composition of the map $q_f {\mid}_{{q_f}^{-1}(N(\partial W_f))}$ onto $N(\partial W_f)$ with the canonical projection to $\partial W_f$ can be regarded as the projection of a linear bundle whose fiber is the {\rm(}$m-n+1${\rm )}-dimensional unit disk. Note that this will be in Proposition \ref{prop:3} defined as a {\rm boundary linear bundle} of $f$.
\item
\label{prop:2.4.2}
 The restriction of $q_f$ to the preimage of $W_f-{\rm Int}\ N(\partial W_f)$ can be regarded as the projection of a smooth bundle over $W_f-{\rm Int}\ N(\partial W_f)$ whose fiber is an {\rm (}$m-n${\rm )}-dimensional standard sphere. Note that this will be in Proposition \ref{prop:3} defined as an {\rm internal smooth bundle} of $f$.
\end{enumerate}
\end{enumerate}
\end{Prop}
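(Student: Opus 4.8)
The plan is to build $W_f$ as the \emph{Stein factorization} (or \emph{Reeb space}) of $f$: declare $x \sim y$ when $f(x)=f(y)$ and $x,y$ lie in the same connected component of the fiber $f^{-1}(f(x))$, set $W_f := M/\!\sim$ with quotient map $q_f : M \to W_f$, and let $\bar f : W_f \to N$ be the map induced by $f$. Then $f = \bar f \circ q_f$ and $q_f$ is a continuous surjection by construction. Since $M$ is compact, $f$ is proper, so every fiber is compact with finitely many components and $W_f$ is compact and Hausdorff. The substantive content of (\ref{prop:2.1}) and (\ref{prop:2.2}) is to endow $W_f$ with the structure of an $n$-dimensional smooth manifold with boundary for which $q_f$ is smooth and $\bar f$ is a smooth immersion, and this is extracted from local models of $q_f$.

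First I would treat the regular part. Over the open set $M \setminus S(f)$ the map $f$ is a submersion, so near each of its points $f$, and hence $q_f$, is locally a coordinate projection $\mathbb{R}^m \to \mathbb{R}^n$; thus this region of $W_f$ is a smooth $n$-manifold without boundary on which $q_f$ is a submersion and $\bar f$ a local diffeomorphism. Along $S(f)$ I would invoke Proposition~\ref{prop:1}(\ref{prop:1.2}): on a neighbourhood of a singular point $f$ is the product of the Morse function $y \mapsto ||y||^2$ on a disk $D^{m-n+1}$ with the identity on a disk $D^{n-1}$ of $S(f)$; collapsing the connected components of the fibers of this model replaces $D^{m-n+1}$ by the half-open interval onto which it maps, so the local model of $q_f$ near $S(f)$ is the identity on $D^{n-1}$ times the quotient map $D^{m-n+1} \to [0,1)$, and $\bar f$ is again a local diffeomorphism. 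These charts patch to give $W_f$ the asserted smooth structure, with $\partial W_f = q_f(S(f))$ and with $q_f$ restricted to $S(f)$ a diffeomorphism onto $\partial W_f$; this is (\ref{prop:2.3}), and $\bar f$ restricted to $\partial W_f$ is an immersion because $f$ restricted to $S(f)$ is one by Proposition~\ref{prop:1}(\ref{prop:1.1}). The refinements of (\ref{prop:2.1}) follow formally: a codimension-$0$ immersion into an orientable $N$ has orientable domain, so $W_f$ is orientable when $N$ is; a continuous image of a connected $M$ is connected, so $W_f$ is connected when $M$ is; the third clause is their conjunction.

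It remains to upgrade the two bundle structures near and away from the boundary. Fix a collar $N(\partial W_f) \cong \partial W_f \times [0,1)$. Over it the fold models above already exhibit the composition of $q_f$ restricted to $q_f^{-1}(N(\partial W_f))$ with the projection to $\partial W_f$ as a smooth bundle with fiber $D^{m-n+1}$; to see that it is \emph{linear}, I would note that two fold germs with the same base coordinates differ by an automorphism of the quadratic function $||y||^2$ fixing the $D^{n-1}$-factor, and any such automorphism can be chosen in the orthogonal group $O(m-n+1)$, so the transition functions reduce to $O(m-n+1)$ and the bundle is linear with fiber $D^{m-n+1}$: this is the boundary linear bundle of (\ref{prop:2.4.1}). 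Over $W_f \setminus \mathrm{Int}\,N(\partial W_f)$ the map $q_f$ is a proper smooth submersion between manifolds with boundary (proper because the preimage of this compact region is closed in $M$, hence compact), hence a smooth fiber bundle by Ehresmann's theorem; over the part of the collar with positive parameter its fiber is the $(m-n)$-sphere bounding the disk fiber of the linear bundle just produced, so by local triviality every fiber of $q_f$ over $W_f \setminus \mathrm{Int}\,N(\partial W_f)$ is diffeomorphic to the standard sphere $S^{m-n}$: this is the internal smooth bundle of (\ref{prop:2.4.2}).

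The genuinely delicate points I expect are, in the second step, checking that the quotient topology on $W_f$ really is that of a smooth manifold with boundary and that the charts coming from the regular part and from the fold models are smoothly compatible, and, in the third step, the reduction of the structure group to $O(m-n+1)$. Once these are in place the Ehresmann argument and the identification of the interior fibers with the \emph{standard} sphere — rather than merely with a homotopy sphere — are routine, precisely because those fibers are diffeomorphic to the boundary sphere of an honest linear disk bundle.
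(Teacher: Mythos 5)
Your construction is the standard one --- Stein factorization followed by the local fold model --- and it is essentially the argument of \cite{saeki} that the paper defers to rather than reproving; for $m>n$ the outline is sound, and the points you flag (smooth compatibility of the regular and fold charts, and the reduction of the structure group of the boundary bundle to $O(m-n+1)$) are exactly where the real work sits in the reference. There is, however, a concrete failure at $m=n$, a case the proposition includes and which the paper explicitly says must be checked separately. When $m=n$ the fibers of $f$ are discrete, so their connected components are singletons, your relation $\sim$ is equality, and the Stein factorization returns $W_f=M$ with $q_f$ the identity and $\bar f=f$: then $\partial W_f$ is empty while the singular set need not be, and $\bar f$ fails to be an immersion at the fold points, so both (\ref{prop:2.1}) and (\ref{prop:2.3}) break. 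The step that fails is precisely your sentence about collapsing fiber components in the fold model: for $m-n+1=1$ the map $D^1\to[0,1)$, $y\mapsto y^2$, has totally disconnected fibers $\{\pm\sqrt{c}\}$, so the quotient collapses nothing and does not produce the half-open interval. For $m=n$ one must build $W_f$ by a different identification (gluing the two local sheets across each fold), as in Section 8 of \cite{saeki}.

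A smaller caveat concerns your identification of the interior fiber with the \emph{standard} sphere: you get it from Ehresmann's theorem together with the fact that some fiber is the boundary sphere of a linear disk fiber over the collar. This requires every connected component of $W_f-{\rm Int}\ N(\partial W_f)$ to meet the collar, i.e., every component of $W_f$ to have non-empty boundary. That holds automatically when no component of $N$ is compact (a codimension-zero immersion of a closed $n$-manifold would have open, closed and compact image), which covers $N={\mathbb{R}}^n$ and everything the paper actually uses; but for a general closed $N$ a boundaryless component of $W_f$ leaves the fiber completely unconstrained by your argument, so a sentence checking this hypothesis --- or restricting to non-empty singular set --- is needed.
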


This is presented for the case $m>n \geq 1$ in \cite{saeki} only. We can easily check for the case $m=n$ and related theory is also discussed in section 8 there.

Conversely, we have the following proposition.
\begin{Prop}[E. g. \cite{saeki}]
	\label{prop:3}
Let $m \geq n \geq 1$ be integers. Let a smooth immersion $\bar{f}:W_f \rightarrow N$ of an $n$-dimensional compact smooth manifold $W_f$ into an $n$-dimensional manifold $N$ with no boundary be given. We also assume the existence of the two bundles in the following first two conditions and the third condition.
 \begin{itemize}
 	\item A linear bundle over $\partial W_f$ whose fiber is the {\rm (}$m-n+1${\rm )}-dimensional unit disk.
 	\item A smooth bundle over $W_f-{\rm Int}\ N(W_f)$ whose fiber is an {\rm (}$m-n${\rm )}-dimensional standard sphere where a suitable small collar neighborhood $N(\partial W_{f}) \subset W_{f}$ is taken. 
 	\item The subbundle of the former bundle whose fiber is $\partial D^{m-n+1}$ and the restriction of the latter bundle to the boundary are equivalent as smooth bundles over $N(\partial W_f)-\partial W_f$. Note that for the former bundle, we consider a natural identification between $\partial W_f$ and $N(\partial W_f)-\partial W_f$ defined naturally from the structure of the collar neighborhood $N(\partial W_f)$ in $W_f$.    
 \end{itemize}

We call the first bundle a {\rm boundary linear bundle over} $W_f$ and the second bundle an {\rm internal smooth bundle over} $W_f$. This respects expositions in \cite{kitazawa6}.

Then we have a special generic map $f:M \rightarrow N$ on a suitable $m$-dimensional closed manifold $M$ into $N$ satisfying the following properties.
\begin{enumerate}
\item There exists a smooth surjection $q_{f}:M \rightarrow W_f$ satisfying the relation $f=\bar{f} \circ q_{f}$.
\item $q_{f}$ maps the singular set of $f$ onto the boundary $\partial W_f \subset W_f$. This is regarded as a diffeomorphism.
\item The composition of the map $q_f {\mid}_{{q_f}^{-1}(N(\partial W_f))}$ onto $N(\partial W_f)$ with the canonical projection to $\partial W_f$ can be regarded as the projection of a linear bundle equivalent to the boundary linear bundle over $W_f$ before. We can canonically define such a bundle as a {\rm boundary linear bundle of $f$}.
\item The restriction of $q_f$ to the preimage of $W_f-{\rm Int}\ N(\partial W_f)$ can be regarded as the projection of a smooth bundle over $W_f-{\rm Int}\ N(\partial W_f)$ equivalent to the internal smooth bundle over $W_f$ before. We can canonically define such a bundle as an {\rm internal smooth bundle of $f$}.
\end{enumerate}
\end{Prop}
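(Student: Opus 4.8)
The plan is to build $M$, the surjection $q_f$ and the map $f$ directly by gluing the two given bundles, and then to check that $f$ has the special generic normal form at each of its singular points; Proposition~\ref{prop:3} is essentially a bookkeeping converse to Proposition~\ref{prop:2}. Write $W_f':=W_f-{\rm Int}\ N(\partial W_f)$, so that via the collar structure $W_f=N(\partial W_f)\cup W_f'$ with $N(\partial W_f)\cong \partial W_f \times [0,1]$, where $\partial W_f=\partial W_f\times\{0\}$ and the inner boundary $\partial W_f\times\{1\}$ is identified with $\partial W_f'$ through the natural identification used in the statement. Let $\pi_{\partial}\colon E_{\partial}\to \partial W_f$ be the total space of the boundary linear bundle (fiber $D^{m-n+1}$) and $\pi_{\rm int}\colon E_{\rm int}\to W_f'$ that of the internal smooth bundle (fiber $S^{m-n}$); the third hypothesis gives a smooth bundle equivalence $\Phi$ from the fiberwise boundary sphere subbundle $\partial E_{\partial}\to\partial W_f$ onto $E_{\rm int}{\mid}_{\partial W_f'}\to \partial W_f'$ covering the identification $\partial W_f\cong \partial W_f'$. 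Since $\partial E_{\partial}$ and $\partial E_{\rm int}$ are exactly these restrictions, one forms the closed smooth $m$-dimensional manifold $M:=E_{\partial}\cup_{\Phi}E_{\rm int}$ by gluing along the boundary (extending $\Phi$ over a collar to smooth the identification, as usual). If $N$ is orientable the two pieces carry compatible orientations, and if $W_f$ is connected then so is $M$.

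Next I would define $q_f\colon M\to W_f$. The linear structure of $E_{\partial}$ provides a well-defined fiberwise norm, so set $q_f(v):=(\pi_{\partial}(v),||v||^2)\in \partial W_f\times[0,1]\cong N(\partial W_f)\subset W_f$ for $v\in E_{\partial}$, and $q_f:=\iota\circ \pi_{\rm int}$ on $E_{\rm int}$, where $\iota\colon W_f'\hookrightarrow W_f$. On the overlap these agree: a point $v\in\partial E_{\partial}$ over $x\in\partial W_f$ has $||v||=1$, hence $q_f(v)=(x,1)$, while $\Phi(v)$ lies over the point of $\partial W_f'$ corresponding to $x$, so $q_f(\Phi(v))=(x,1)$ as well; using $||v||^2$ rather than $||v||$ makes $q_f$ smooth across the zero section of $E_{\partial}$. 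Put $f:=\bar{f}\circ q_f$; then $f$ is smooth, $q_f$ is surjective, and the relation $f=\bar{f}\circ q_f$ holds by construction, which is the first assertion.

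The heart of the argument is verifying that $f$ is special generic. Over $M-q_f^{-1}(\partial W_f)$ the map $q_f$ is a submersion: it is a bundle projection on $E_{\rm int}$, and in local coordinates $D^{m-n+1}\times U$ on $E_{\partial}$ it is $(y,u)\mapsto (u,||y||^2)$, a submersion away from $y=0$; hence $f=\bar{f}\circ q_f$ is a submersion there since $\bar{f}$ is an immersion, and all singular points of $f$ lie in $q_f^{-1}(\partial W_f)$, which is the zero section of $E_{\partial}$, a copy of $\partial W_f$. Near such a point, take the coordinates $D^{m-n+1}\times U$ on $E_{\partial}$, and since $\bar{f}$ is an immersion of the $n$-manifold-with-boundary $W_f$ into the $n$-manifold $N$, choose coordinates on $N$ in which $\bar{f}$ restricted to the corresponding half-ball $U\times[0,\epsilon)\subset N(\partial W_f)$ is $(u,t)\mapsto(u,t)$. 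Then $f(y,u)=(u,||y||^2)=(u_1,\dots,u_{n-1},\sum_j y_j^2)$, exactly the special generic form; so every singular point of $f$ has the required form, $f$ is special generic with singular set $q_f^{-1}(\partial W_f)$, and $q_f$ restricts there to a diffeomorphism onto $\partial W_f$, giving the second assertion. Finally, by the very construction, $q_f^{-1}(N(\partial W_f))=E_{\partial}$ and the composition with the projection $N(\partial W_f)\to\partial W_f$ is $\pi_{\partial}$, while $q_f^{-1}(W_f')=E_{\rm int}$ with $q_f$ there equal to $\pi_{\rm int}$; thus the boundary linear bundle of $f$ is the given $E_{\partial}$ and its internal smooth bundle is the given $E_{\rm int}$, which are the remaining two assertions.

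As for difficulty, there is no deep point: the only places needing care are the corner-smoothing in the gluing $E_{\partial}\cup_{\Phi}E_{\rm int}$ (the equivalence $\Phi$ is given only as a bundle map and must be pushed over a collar), the choice of $||\cdot||^2$ so that $q_f$ is globally smooth and produces the Morse normal form, and the matching of the collar identification $\partial W_f\cong\partial W_f'$ used in the third hypothesis with the one implicit in the definition of $q_f$ on $E_{\partial}$. These are exactly the routine verifications for which one refers to ``E.~g.\ \cite{saeki}'', and to section~8 there for the equidimensional case $m=n$.
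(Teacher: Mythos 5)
Your construction is correct and is precisely the standard one that the paper leaves implicit by citing \cite{saeki} (the paper offers no proof of Proposition \ref{prop:3}, only the remark about trivial bundles): glue the disk bundle over $\partial W_f$ to the sphere bundle over $W_f-{\rm Int}\ N(\partial W_f)$ along the equivalence of their boundary sphere bundles, define $q_f$ by the fiberwise squared norm on the disk-bundle piece and by the bundle projection on the other, and read off the special generic normal form from $(y,u)\mapsto (u,\sum_j y_j^2)$ composed with the local diffeomorphism $\bar{f}$. The points you flag as needing care (collar smoothing of the gluing, well-definedness of $\|\cdot\|^2$ via the orthogonal structure group, matching of the collar identifications) are exactly the routine verifications, so there is no gap.
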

Simplest examples are obtained by setting 
the two bundles as trivial ones.

Smooth manifolds have canonical PL structures and regarded as polyhedra canonically. In Proposition \ref{prop:4} and the present paper, this is important.

\begin{Prop}[E. g. \cite{saeki}]
\label{prop:4}
Let $m>n \geq 1$ be integers. Let $f:M \rightarrow N$ be a special generic map on an $m$-dimensional closed and connected manifold $M$ into an $n$-dimensional manifold $N$ with no boundary. Then we have an {\rm (}$m+1${\rm )}-dimensional compact and connected {\rm (PL)} manifold $W$ satisfying the following {\rm (}differential{\rm )} topological properties.
 \begin{enumerate}
 	\item \label{prop:4.1} The boundary of $W$ is $M$ where we consider in the topology or PL category.
 	\item \label{prop:4.2} $W$ collapses to $W_f$ where we abuse the notation in Propositions \ref{prop:2} and \ref{prop:3}.
 	\item \label{prop:4.3} $W_f$ can be also identified with a suitable CW subcomplex {\rm (}resp. subpolyhedron{\rm )} of $W$.
 	\item \label{prop:4.4} Let $i_M:M \rightarrow W$ denote the canonical inclusion. Then for a suitable continuous {\rm (}resp .{\rm PL)} map $r_f:W \rightarrow W_f$ giving a collapsing to $W_f$, we have the relation $q_f=r_f \circ i_M$. Furthermore, $r_f$ can be regarded as the projection of a bundle over $W_f$ whose fiber is diffeomorphic to $D^{m-n+1}$ and which may not be a smooth bundle. Furthermore, we may assume the following two where we can abuse the notions, terminologies and notation in Proposition \ref{prop:3}. 
 	\begin{enumerate}
 		\item Consider the restriction of $r_f$ to the preimage of $W_f-{\rm Int}\ N(\partial W_f)$. The subbundle obtained by restricting the fiber to $\partial D^{m-n+1} \subset D^{m-n+1}$ is equivalent to an internal smooth bundle of $f$.
 		\item Consider the restriction of $r_f$ to the preimage of $N(\partial W_f)$. Next consider the composition of this projection with the canonical projection to $\partial W_f$. This can be regarded as the projection of a bundle whose fiber is diffeomorphic to $D^{m-n+1} \times D^1$.
 		Furthermore, consider the subbundle whose fiber is $(\partial D^{m-n+1} \times D^1) \cup (D^{m-n+1} \times {S^0}_0)$ where ${S^0}_0 \subset S^0=\partial D^1$ denotes a one-point set. It is equivalent to a boundary linear bundle of $f$.
 	
  \end{enumerate}
 
 	\item \label{prop:4.5} In the case $m-n=1,2,3$ for example, $W$ and $r_f$ can be chosen as a smooth manifold and a smooth map and the bundle over $W_f$ can be regarded as a smooth bundle with the projection $r_f$.
    \item \label{prop:4.6}
     In the case where $M$ and $N$ are oriented, we can take $W$ as an oriented manifold whose boundary is $M$. As before, in the case $m-n=1,2,3$ for example, $W$ and $r_f$ can be chosen as a smooth manifold and a smooth map and the bundle over $W_f$ can be regarded as a smooth bundle with the projection $r_f$.
    \end{enumerate}
\end{Prop}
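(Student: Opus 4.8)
\textbf{Proof proposal for Proposition \ref{prop:4}.}
The plan is to build $W$ by ``filling in the fibres of $q_f$''. Start from the factorization $f=\bar f\circ q_f$ of Proposition \ref{prop:2}: fix a small collar $N(\partial W_f)\cong\partial W_f\times[0,1]$ with $\partial W_f=\partial W_f\times\{0\}$, put $W_f^{\circ}:=W_f-\mathrm{Int}\,N(\partial W_f)$, let $E_{\mathrm{int}}\to W_f^{\circ}$ be an internal smooth $S^{m-n}$-bundle of $f$ and $E_\partial:=q_f^{-1}(N(\partial W_f))$, regarded via Proposition \ref{prop:2}\,(\ref{prop:2.4.1}) as the total space of a (linear) $D^{m-n+1}$-bundle over $\partial W_f$ — a boundary linear bundle of $f$. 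By Proposition \ref{prop:2}\,(\ref{prop:2.4}) together with the compatibility condition in Proposition \ref{prop:3}, $M$ is the union $E_{\mathrm{int}}\cup_{S(E_\partial)}E_\partial$, where $S(E_\partial)=\partial E_\partial$ is identified with the fibrewise boundary‑sphere part of $E_{\mathrm{int}}$ over the inner collar boundary $\partial W_f\times\{1\}\cong\partial W_f$, and the zero section of $E_\partial$ is the singular set of $f$. I will manufacture an $(m+1)$‑manifold $W$ together with a $D^{m-n+1}$‑bundle projection $r_f\colon W\to W_f$ extending these data.

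Over $W_f^{\circ}$ I would form the fibrewise cone $W^{\circ}$ of $E_{\mathrm{int}}$: viewing the smooth $S^{m-n}$‑bundle as a PL $S^{m-n}$‑bundle and using that conical (Alexander) extension gives a homomorphism $\mathrm{PL}(S^{m-n})\to\mathrm{PL}(D^{m-n+1})$, this produces a PL $D^{m-n+1}$‑bundle $W^{\circ}$ whose fibrewise boundary‑sphere subbundle is exactly $E_{\mathrm{int}}$ and whose restriction over $\partial W_f\times\{1\}$ is the fibrewise cone of $S(E_\partial)$, hence $E_\partial$ again (here one uses the linearity of the boundary linear bundle, so that coning its sphere bundle recovers it). Over the collar I would set $W_C:=E_\partial\times_{\partial W_f}[0,1]$, a $D^{m-n+1}$‑bundle over $N(\partial W_f)$ and therefore a bundle over $\partial W_f$ with fibre $D^{m-n+1}\times D^1$; its ``$M$‑part'' $S(E_\partial)\times[0,1]\,\cup\,E_\partial\times\{0\}$ has fibre $(\partial D^{m-n+1}\times D^1)\cup(D^{m-n+1}\times {S^0}_0)$, and the complementary face $E_\partial\times\{1\}$ is glued to $W^{\circ}|_{\partial W_f\times\{1\}}$. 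Let $W:=W^{\circ}\cup W_C$ with the obvious collapsing retraction $r_f$, equal over $W_f^{\circ}$ to the bundle projection of $W^{\circ}$ and over $N(\partial W_f)$ to $W_C\to N(\partial W_f)$. A collar‑and‑coordinate bookkeeping then gives: $W$ is a compact connected PL manifold; $r_f$ is a $D^{m-n+1}$‑bundle (over $W_f^{\circ}$ it is $W^{\circ}$, over $N(\partial W_f)$ it is the pullback of $E_\partial$); $W$ collapses to $W_f$, which sits inside $W$ as the union of the zero sections of these disk bundles, via the fibrewise cone collapse, so $r_f$ realizes the collapse and $W_f$ is a CW subcomplex (subpolyhedron); $\partial W=E_{\mathrm{int}}\cup_{S(E_\partial)}E_\partial=M$; $q_f=r_f\circ i_M$ after choosing the collar, the fibre coordinates of $E_\partial$ and the gluing maps compatibly; and the two subbundle statements \ref{prop:4}\,(\ref{prop:4.4}) hold by construction — \ref{prop:4.4}(a) because the boundary‑sphere subbundle of $W^{\circ}$ is $E_{\mathrm{int}}$, and \ref{prop:4.4}(b) because $W_C\to\partial W_f$ has fibre $D^{m-n+1}\times D^1$ with the prescribed $M$‑part as its distinguished subbundle.

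For the smooth refinement \ref{prop:4}\,(\ref{prop:4.5}) I would use that for $m-n\le 3$ one may reduce the structure group of the internal smooth bundle to the linear group $O(m-n+1)$ — via $\mathrm{Diff}(S^{1})\simeq O(2)$, $\mathrm{Diff}(S^{2})\simeq O(3)$ (Smale), $\mathrm{Diff}(S^{3})\simeq O(4)$ (Hatcher), equivalently the vanishing of the relevant groups of homotopy spheres — so the fibrewise cone becomes a genuine smooth linear $D^{m-n+1}$‑bundle $W^{\circ}$; since $W_C$ is already smooth (its fibre bundle is linear), the two pieces glue smoothly along a collar and $r_f$ is smooth, so the disk bundle over $W_f$ is a smooth bundle. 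For the oriented case \ref{prop:4}\,(\ref{prop:4.6}), when $M$ and $N$ are oriented one orients $W_f$ (possible by Proposition \ref{prop:2}\,(\ref{prop:2.1}), as $N$ is orientable) and orients the $D^{m-n+1}$‑fibres; this orients $W$, and since $M$ is connected one matches the induced boundary orientation on $\partial W=M$ with the given one, after possibly reversing the base or fibre orientation. The low‑codimension smoothness assertion is carried out by the same linearization.

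The main obstacle I anticipate is twofold. First, making the collar/coordinate bookkeeping precise enough that $\partial W=M$ and $q_f=r_f\circ i_M$ hold \emph{on the nose}, together with the exact subbundle identifications of \ref{prop:4}\,(\ref{prop:4.4}); this is routine but delicate, and is exactly where one must choose the collar of $\partial W_f$ in $W_f$, the radial fibre coordinates on $E_\partial$, and the various gluing maps in a mutually compatible way. Second, the smoothness in \ref{prop:4}\,(\ref{prop:4.5}) genuinely rests on the low‑dimensional diffeomorphism‑group facts quoted above, whereas in the PL case the only non‑formal input is the conical extension $\mathrm{PL}(S^{m-n})\to\mathrm{PL}(D^{m-n+1})$ (the Alexander trick), which is precisely what forces $W$ and $r_f$ to be merely PL in general.
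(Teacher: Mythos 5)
The paper itself does not prove Proposition \ref{prop:4} (it is quoted from \cite{saeki}), so your argument can only be measured against the standard construction from the literature; in its architecture it is exactly that construction: decompose $M$ as $E_{\mathrm{int}}\cup_{S(E_\partial)}E_\partial$ via Proposition \ref{prop:2} (\ref{prop:2.4}), cone the internal sphere bundle fibrewise using the Alexander trick, attach the collar piece over $N(\partial W_f)$, and linearize via Smale--Hatcher when $m-n\le 3$. Items (\ref{prop:4.1}), (\ref{prop:4.2}), (\ref{prop:4.3}), (\ref{prop:4.5}) and (\ref{prop:4.6}) do come out of this (for (\ref{prop:4.6}), orientability of the fibrewise cone follows from orientability of $E_{\mathrm{int}}\subset M$ and of $W_f$, so ``orienting the fibres'' is legitimate; also note that for (\ref{prop:4.5}) you really need the full homotopy equivalences $\mathrm{Diff}(S^k)\simeq O(k+1)$, $k\le 3$, to reduce the structure group over an $n$-dimensional base, not merely the vanishing of groups of homotopy spheres --- but you do cite the right theorems).

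There is, however, one step that fails as written: the relation $q_f=r_f\circ i_M$ cannot hold for the map $r_f$ you construct. Over the collar you take $r_f$ to be the projection of the pullback bundle $W_C=E_\partial\times_{\partial W_f}[0,1]\rightarrow N(\partial W_f)$. Then for $b\in\partial W_f=\partial W_f\times\{0\}$ the fibre $r_f^{-1}(b)$ is an entire disc $D^{m-n+1}_b\times\{0\}$ contained in your $M$-part of $\partial W$, so $(r_f\circ i_M)^{-1}(b)$ is $(m-n+1)$-dimensional, whereas $q_f^{-1}(b)$ is the single singular point lying over $b$. No choice of collar, fibre coordinates or gluing maps --- that is, no identification of $M$ with $\partial W$ --- can reconcile these two preimages, so this is not the routine bookkeeping you anticipate. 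The repair is to abandon the requirement that $r_f$ be a genuine $D^{m-n+1}$-bundle projection over $N(\partial W_f)$: on $W_C$ one must instead use a map of the form $(e,t)\mapsto(\pi(e),\rho(\|e\|,t))$ whose restriction to the $M$-part reproduces the radial behaviour of $q_f$ on $q_f^{-1}(N(\partial W_f))$ (centre of the disc to $\partial W_f\times\{0\}$, boundary sphere to $\partial W_f\times\{1\}$) and which agrees with the bundle projection on the face glued to $W^{\circ}$. This is still compatible with the statement, because (\ref{prop:4.4}) only asserts the bundle structure over $N(\partial W_f)$ for the \emph{composition} of $r_f$ with the projection to $\partial W_f$, which is unchanged. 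Alternatively, every application of Proposition \ref{prop:4} in the present paper only uses $q_f\simeq r_f\circ i_M$ up to homotopy, and that weaker relation your $r_f$ does satisfy; but as stated the proposition claims equality, and your verification of it is the one point of the proposal that genuinely breaks.
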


For more general propositions, see \cite{saekisuzuoka} and see papers \cite{kitazawa0.1,kitazawa0.2,kitazawa0.3} and several preprints in References by the author.

We introduce notation and terminologies on {\it homology groups}, {\it cohomology groups}, {\it cohomology rings} and {\it homotopy groups}. For systematic expositions on such notions and fundamental algebraic topology, see \cite{hatcher} for example.

Let $(X,X^{\prime})$ be a pair of topological spaces satisfying $X^{\prime} \subset X$. We allow $X^{\prime}$ to be the empty set. 
Let $A$ be a commutative ring. 
$A$ is, for example, taken as the ring of all integers, denoted by $\mathbb{Z} \subset \mathbb{R}$. 

The $j$-th {\it homology group} and {\it cohomology group} of the pair $(X,X^{\prime})$ of topological spaces satisfying $X^{\prime} \subset X$ are defined and denoted by $H_{j}(X,X^{\prime};A)$ and $H^{j}(X,X^{\prime};A)$ where $A$ is the {\it coefficient ring}. If $X^{\prime}$ is empty, then we may omit ",$X^{\prime}$" in the notation and the homology group and the cohomology group of the pair $(X,X^{\prime})$ are also called the {\it homology group} and the {\it cohomology group} of $X$. {\rm (}{\it Co}{\rm )}{\it homology classes} of $(X,X^{\prime})$ or $X$ mean elements of the (resp. co)homology groups. 

The {\it $k$-th homotopy group} of a topological space $X$ is denoted by ${\pi}_k(X)$.

Let $(X_1,{X_1}^{\prime})$ and $(X_2,{X_2}^{\prime})$ be pairs of topological spaces satisfying ${X_1}^{\prime} \subset X_1$ and ${X_2}^{\prime} \subset X_2$ where the second topological spaces of these pairs are allowed to be the empty sets as before. For a continuous map $c:X_1 \rightarrow X_2$ satisfying $c({X_1}^{\prime}) \subset {X_2}^{\prime}$, $c_{\ast}:H_{\ast}(X_1,{X_1}^{\prime};A) \rightarrow H_{\ast}({X_2},{X_2}^{\prime};A)$ and $c^{\ast}:H^{\ast}({X_2},{X_2}^{\prime};A) \rightarrow H^{\ast}(X_1,{X_1}^{\prime};A)$ denote the natural homomorphisms defined canonically. For a continuous map $c:X_1 \rightarrow X_2$, $c_{\ast}:{\pi}_k(X_1) \rightarrow {\pi}_k(X_2)$ also denotes the natural homomorphism between the homotopy groups of degree $k$ which is also defined canonically. 

Let $H^{\ast}(X;A)$ denote the direct sum of the $j$-th cohomology groups ${\oplus}_{j=0}^{\infty} H^j(X;A)$ for every integer $j \geq 0$. The cup products for a pair $c_1,c_2 \in H^{\ast}(X;A)$ and a sequence $\{c_j\}_{j=1}^l \subset H^{\ast}(X;A)$ of $l>0$ cohomology classes are important. $c_1 \cup c_2$ and ${\cup}_{j=1}^l c_j$ denote them. The former is  regarded as a specific case or the case for $l=2$.
This makes $H^{\ast}(X;A)$ a graded commutative algebra and we call this the {\it cohomology ring} of $X$ whose {\it coefficient ring} is $A$.  

\section{The Proofs of our Main Theorems.}
\label{sec:3}
\begin{Thm}
\label{thm:2}
Let $m>n \geq 1$ be integers. Let $l>0$ be another integer.
Let $M$ be an $m$-dimensional closed and connected manifold.
Let $A$ be a commutative ring. 
\begin{enumerate}
	\item \label{thm:2.1} {\rm (E. g. \cite{saeki})}
	 Let $f:M \rightarrow N$ be a special generic map into an $n$-dimensional connected and non-closed manifold with no boundary.
	Then the homomorphisms ${q_f}_{\ast}:H_j(M;A) \rightarrow H_j(W_f;A)$, ${q_f}^{\ast}:H^j(W_f;A) \rightarrow H^j(M;A)$ and  ${q_f}_{\ast}:{\pi}_j(M) \rightarrow {\pi}_j(W_f)$ are also isomorphisms for $0 \leq j \leq m-n$ where we abuse the notation in Propositions \ref{prop:2} and \ref{prop:3}.
	\item \label{thm:2.2} {\rm (\cite{kitazawa})} Let there exist a sequence $\{a_j\}_{j=1}^l \subset H^{\ast}(M;A)$ satisfying the following two. 
\begin{itemize}
\item The cup product ${\cup}_{j=1}^l a_j$ is not zero.
\item The degree of each cohomology class in $\{a_j\}_{j=1}^l$ is smaller than or equal to $m-n$. The sum of the degrees for all these $l$ cohomology classes is greater than or equal to $n$.
\end{itemize}
Then $M$ does not admit special generic maps into any $n$-dimensional connected and non-closed manifold which has no boundary.
\end{enumerate}
\end{Thm}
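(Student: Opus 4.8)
\emph{Proof proposal.} For part (\ref{thm:2.1}) the plan is to deduce it from Proposition \ref{prop:4} rather than to argue directly with the sphere bundle of Proposition \ref{prop:2}. Applying Proposition \ref{prop:4} to $f$ produces a compact connected $(m+1)$-dimensional (PL) manifold $W$ with $\partial W=M$ that collapses onto $W_f$, together with the collapse $r_f\colon W\to W_f$, which is the projection of a (PL) $D^{m-n+1}$-bundle and satisfies $q_f=r_f\circ i_M$ for the inclusion $i_M\colon M\hookrightarrow W$. I would first note that, $M$ being closed and $m>n$, the map $f$ has nonempty singular set: otherwise $f$ would be a submersion from a compact manifold onto the connected non-compact manifold $N$, which is impossible, since a submersion is an open map and its image would then be a nonempty compact open subset of the connected manifold $N$. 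Hence $\partial W_f\neq\emptyset$ by Proposition \ref{prop:2}, so the compact $n$-manifold $W_f$ admits a handle decomposition with handles of index $\le n-1$. Pulling this decomposition back through the disk bundle $r_f$, which is trivial over each handle, gives such a decomposition of $W$, and dualising it exhibits $W$ as obtained from a collar of $M=\partial W$ by attaching handles of index $\ge (m+1)-(n-1)=m-n+2$ only. Consequently the pair $(W,M)$ is homotopy equivalent to a relative CW pair with cells only in dimensions $\ge m-n+2$, so $\pi_j(W,M)=0$ and $H_j(W,M;A)=H^j(W,M;A)=0$ for $j\le m-n+1$ and any coefficient ring $A$; the long exact sequences of the pair then give that $i_M$ induces isomorphisms on $\pi_j$, on $H_j(\,\cdot\,;A)$ and on $H^j(\,\cdot\,;A)$ for $0\le j\le m-n$, and composing with the homotopy equivalence $r_f$ yields the statement for $q_f$.

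For part (\ref{thm:2.2}) the plan is to argue by contradiction. Suppose $f\colon M\to N$ is such a special generic map and keep the notation of Proposition \ref{prop:2}. As above $W_f$ is a compact $n$-manifold with $\partial W_f\neq\emptyset$, so $H^k(W_f;A)=0$ for all $k\ge n$. By part (\ref{thm:2.1}) the ring homomorphism $q_f^{\ast}\colon H^{\ast}(W_f;A)\to H^{\ast}(M;A)$ is an isomorphism in every degree $\le m-n$, so each $a_j$, having degree at most $m-n$, can be written $a_j=q_f^{\ast}b_j$ with $b_j\in H^{\ast}(W_f;A)$. Then $\bigcup_{j=1}^{l}a_j=q_f^{\ast}\bigl(\bigcup_{j=1}^{l}b_j\bigr)$ and $\bigcup_{j=1}^{l}b_j\in H^{d}(W_f;A)$ with $d=\sum_{j=1}^{l}\deg a_j\ge n$, whence $\bigcup_{j=1}^{l}b_j=0$ and therefore $\bigcup_{j=1}^{l}a_j=0$, contradicting the hypothesis.

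The hard part will be the first one, and specifically making the connectivity of $(W,M)$ come out \emph{sharp}: one really needs $W$ to be built from $M$ with handles of index $\ge m-n+2$, not merely $\ge m-n+1$, because that is exactly what turns the epimorphism one would otherwise get in the top degree $j=m-n$ into an isomorphism. This sharpness rests on the homotopy-dimension bound $n-1$ for $W_f$, hence on $\partial W_f\neq\emptyset$, i.e.\ on the genuine presence of singular points of $f$. Given part (\ref{thm:2.1}), part (\ref{thm:2.2}) is then a purely formal consequence of $q_f^{\ast}$ being a ring isomorphism through degree $m-n$ together with the vanishing of $H^{k}(W_f;A)$ for $k\ge n$.
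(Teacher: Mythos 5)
Your proposal is correct and follows essentially the same route as the paper: both obtain the $(m+1)$-manifold $W$ of Proposition \ref{prop:4}, use $\partial W_f\neq\emptyset$ to see that $W$ is built from a collar of $M$ by handles of index at least $m-n+2$ (equivalently, that $W$ has the homotopy type of an $(n-1)$-dimensional polyhedron), and deduce part (1) from the resulting vanishing of the relative homotopy and (co)homology of $(W,M)$. For part (2) you pull the classes back to $W_f$ via $q_f^{\ast}$ while the paper pulls them back to $W$ via $i_M^{\ast}$, but since $r_f$ is a homotopy equivalence these are the same argument.
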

We review a proof omitting expositions on a {\it handle} and its {\it index} for PL manifolds and general polyhedra.
\begin{proof}
We first show (\ref{thm:2.1}).
We can take an ($m+1$)-dimensional compact and connected PL manifold $W$ in Proposition \ref{prop:4}. 
$W_f$ is a compact smooth manifold and smoothly immersed into $N$. It is simple homotopy equivalent to an ($n-1$)-dimensional compact and connected polyhedron. 
$W$ is simple homotopy equivalent to $W_f$. $W$ is shown to be a PL manifold obtained by attaching handles to $M \times \{0\} \subset M \times [-1,0]$ whose indices are greater than $(m+1)-{\dim W_f}=m-n+1$. Here $M$ can be and is identified with $M \times \{-1\} \subset M \times [-1,0]$.

From this we have (\ref{thm:2.1}).

We show (\ref{thm:2.2}). Suppose that $M$ admits a special generic map into an $n$-dimensional connected and non-closed manifold $N$ with no boundary. We have an ($m+1$)-dimensional manifold $W$ as just before. By virtue of (\ref{thm:1.1}) and Proposition \ref{prop:4} (\ref{prop:4.4}), we can take a unique cohomology class $b_j \in H^{\ast}(W;A)$ satisfying $a_j={i_M}^{\ast}(b_j)$ where $i_M$ is as in Proposition \ref{prop:4}. $W$ has the simple homotopy type of an ($n-1$)-dimensional polyhedron. This means that the cup product ${\cup}_{j=1}^l a_j$ is zero, which contradicts the assumption. This completes the proof.
\end{proof}

We shortly review several notions on homology classes of manifolds. 
$A$ is a commutative ring having the identity element different from the zero element.
The {\it fundamental class} of a compact, connected and oriented smooth, PL, or more generally, topological manifold $Y$ is the canonically defined ($\dim Y$)-th homology class. This is the generator of $H_{\dim Y}(Y,\partial Y;A)$, which is isomorphic to $A$, and canonically and uniquely defined from the orientation. 
Let $i_{Y,X}:Y \rightarrow X$ be an embedding satisfying $i_{Y,X}(\partial Y) \subset \partial X$ and $i_{Y,X}({\rm Int}\ Y) \subset {\rm Int}\ X$ where we consider the embedding as a suitable one in the suitable category.

Let $h \in H_{j}(X,\partial X;A)$. If the value of the homomorphism ${i_{Y,X}}_{\ast}$ induced by the embedding $i_{Y,X}:Y \rightarrow X$ at the fundamental class of $Y$ is $h$, then $h$ is said to be {\it represented} by the oriented submanifold $Y$. 

We explain about the {\it Poincar\'e dual} and the {\it dual} to each element of a basis of a module without explaining about rigorous definitions. In various scenes of the present paper, we consider Poincar\'e duality theorem for a compact and connected manifold and we consider the {\it Poincar\'e dual} to a homology class or a cohomology class. Consider a basis of a module whose elements are not divisible by elements which are not unit elements. Each of the elements of the basis, we have the {\it dual} to it and this is also important. For a homology group or its subgroup and its basis as before, we naturally have a cohomology class uniquely as the dual to a homology class of the basis. Their degrees are same. This notion is different from the notion of a Poincar\'e dual.

For notions here, see \cite{hatcher} again for example.

\begin{proof}[A proof of Main Theorem \ref{mthm:1}]
	We abuse the notation in Propositions \ref{prop:2} and \ref{prop:3} and Theorem \ref{thm:2} for example.
	
	Suppose that a $6$-dimensional closed and simply-connected manifold $M$ admits a special generic map $f:M \rightarrow {\mathbb{R}}^4$.
	According to Theorem \ref{thm:2} (\ref{thm:2.1}), $M$ and $W_f$ are simply-connected and $H_2(M;\mathbb{Z})$ is isomorphic to $H_2(W_f;\mathbb{Z})$.
	According to \cite{nishioka}, $H_2(W_f;\mathbb{Z})$ is free and $H_2(M;\mathbb{Z})$ is free as a result. Note that this is a key ingredient in showing Theorem \ref{thm:1} (\ref{thm:1.3}). We can take a basis of $H_2(W_f;\mathbb{Z})$ and smoothly and disjointly embedded $r \geq 0$ $2$-dimensional spheres in $W_f-{\rm Int}\ N(\partial W_f)$ enjoying the following two.
	\begin{itemize}
		\item $r$ denotes the ranks of $H_2(M;\mathbb{Z})$ and $H_2(W_f;\mathbb{Z})$.
		\item Each of these $r$ elements of the basis is represented by each of these $r$ $2$-dimensional spheres in $W_f-{\rm Int}\ N(\partial W_f)$.
	\end{itemize}  
Furthermore, the restriction of the bundle over $W_f-{\rm Int} N(W_f)$ in Proposition \ref{prop:3} to each $2$-dimensional sphere of the $r \geq 0$ spheres admits a section due to some general theory of linear bundles. This bundle over $W_f-{\rm Int} N(W_f)$ is a linear bundle whose fiber is diffeomorphic to $S^2$ and an internal smooth bundle of $f$. By Poincar\'e duality theorem, we have a basis of $H_2(W_f,\partial W_f;\mathbb{Z})$ each of which is the Poincar\'e dual to the dual to the each element of the basis of $H_2(W_f;\mathbb{Z})$. 
	
	Homology classes in $H_2(W_f,\partial W_f;\mathbb{Z})$ are represented by compact, connected and oriented surfaces. This is a very specific case of Thom's theory \cite{thom} on homology classes of compact manifolds represented by oriented submanifolds.
	
	We thus have $r \geq 0$ compact, connected and oriented surfaces smoothly embedded in $W_f$ such that the $r$ Poincar\'e duals before are represented by them. Note also that their interiors are embedded in the interior ${\rm Int}\ W_f$ and that their boundaries are embedded in the boundary $\partial W_f$.
	Furthermore, by the definition of a special generic map, Proposition \ref{prop:1} (Proposition \ref{prop:1} (\ref{prop:1.2})) and fundamental theory of so-called {\it generic} smooth embeddings and smooth maps, the preimages of the $r$ compact, connected and oriented surfaces are regarded as the $4$-dimensional closed, connected and orientable manifolds of the domains of some special generic maps into orientable surfaces with no boundaries. Note that the orientability of these $4$-dimensional manifolds follows from the fact that (the total space of) an internal smooth bundle of each special generic map $f_{\lambda}$ is orientable for example.
	
	 We have exactly $r$ homology classes in $H_4(M;\mathbb{Z})$ each of which is represented by each of these $r$ $4$-dimensional closed, connected and suitably oriented manifolds. They can be regarded as the Poincar\'e duals to the duals to elements of a suitable basis of $H_2(M;\mathbb{Z})$. The set of all of these homology classes can be regarded as a basis of $H_4(M;\mathbb{Z})$.
	 For example, we can choose a basis of $H_2(M;\mathbb{Z})$ each element of which is represented by (the image of) the section of each bundle over the $r$ bundles over the $r$ $2$-dimensional spheres in ${\rm Int}\ W_f$ before.
	 
	 We can see that these $r$ $4$-dimensional closed and oriented manifolds which are also smooth closed submanifolds in $M$ are oriented null-cobordant. This follows from Proposition \ref{prop:4} (\ref{prop:4.6}). Note that we apply this to the special generic map $f_{\lambda}$ on the $4$-dimensional manifold. 
	 
	 The special generic map $f_{\lambda}$ is regarded as a map into a non-closed, connected and orientable surface. By Propositions \ref{prop:2} and \ref{prop:3} and the fundamental fact that compact and orientable surfaces can be smoothly immersed into ${\mathbb{R}}^2$, we have a special generic map on the $4$-dimensional manifold into ${\mathbb{R}}^2$. Theorem \ref{thm:1} (\ref{thm:1}) implies that the 2nd cohomology group of the $4$-dimensional manifold is zero where the coefficient ring is $\mathbb{Z}$. This means that a normal bundle of the $4$-dimensional manifold which is a smooth submanifold in $M$ is trivial by a fundamental argument on linear bundles.
From another fundamental argument on linear bundles and characteristic classes again, we have that the 1st Pontrjagin class of $M$ with an arbitrary orientation is zero. 
	 
	 Theorem \ref{thm:2} yields that the cup products of 2nd homology classes for $M$ is always zero where the coefficient ring is $\mathbb{Z}$.
	 
	 These two together with classification theorems of $6$-dimensional closed and simply-connected manifolds such as \cite{wall,zhubr,zhubr2} imply that $M$ is either of the following two.
	 \begin{itemize}
	 	\item A $6$-dimensional standard sphere. 
	 	\item A closed manifold diffeomorphic to one represented as a connected sum of the following two manifolds.
	 	\begin{itemize}
	 		\item (A copy of) $S^3 \times S^3$.
	 		\item The total space of a linear bundle over $S^2$ whose fiber is diffeomorphic to the $4$-dimensional unit sphere.
	 	\end{itemize}
	 \end{itemize}
 
 Every standard sphere admits a special generic map into any Euclidean space whose dimension is at most the dimension of the sphere. It is sufficient to consider canonical projections.
 
 We can easily have a copy of $S^2 \times D^2$ smoothly embedded in ${\mathbb{R}}^4$. By applying Proposition \ref{prop:3}, we have a special generic map on an arbitrary manifold diffeomorphic to the total space of a linear bundle over $S^2$ whose fiber is diffeomorphic to the $4$-dimensional unit sphere. 
 
 It is important that the structure group of a linear bundle over $S^2$ whose fiber is the $k$-dimensional unit sphere $S^k$ ($k \geq 2$) is regarded as the group consisting of linear transformations on $S^1 \subset S^k$, or the $2$-dimensional rotation group $SO(2)$, isomorphic to $S^1$ as a Lie group. Moreover, after inductively choosing equators starting from $S^k$, we have $S^1 \subset S^k$ and the action by the linear transformations are naturally regarded as linear transformations on $S^k$. This is an elementary argument on linear bundles and for related studies, see \cite{milnorstasheff,steenrod} for example. 

 In addition, our construction of a special generic map here is also essentially regarded as construction Nishioka has also shown in \cite{nishioka} in the case where the manifolds are $5$-dimensional.
 
 We consider the product map of a Morse function in Reeb's theorem on $S^3$ and the identity map on $S^3$ and we can smoothly embed the manifold of the target into ${\mathbb{R}}^4$. Thus $S^3 \times S^3$ admits a special generic map into ${\mathbb{R}}^4$.
 
 According to \cite{saeki} for example, construction of special generic maps on manifolds represented as connected sums of manifolds admitting special generic maps into a fixed Euclidean space is easy. Of course we construct ones into the fixed Euclidean space. 
 
 This completes the proof.
\end{proof}
\begin{Thm}
	\label{thm:3} Suppose that a $6$-dimensional, closed and simply-connected manifold $M$ admits a special generic map $f:M \rightarrow {\mathbb{R}}^5$. Let $e_{\rm F}$ be an arbitrary element of the homology group $H_2(M;\mathbb{Z}/2\mathbb{Z})$ which is in the image of the canonically obtained homomorphism ${\phi}_{G,\mathbb{Z},\mathbb{Z}/2\mathbb{Z}}$ associated with some internal direct sum decomposition of the form $G_{\rm Free} \oplus G_{\rm Finite}$ of the homology group $H_2(M;\mathbb{Z})$ where $G_{\rm Free}$ and $G_{\rm Finite}$ are free and finite, respectively. Assume also the following conditions.
	\begin{enumerate}
		\item The homomorphism ${\phi}_{G,\mathbb{Z},\mathbb{Z}/2\mathbb{Z}}$ is defined on the summand $G_{\rm Finite}$.
		\item The homomorphism ${\phi}_{G,\mathbb{Z},\mathbb{Z}/2\mathbb{Z}}$ is the restriction of the homomorphism from $H_2(M;\mathbb{Z})$ into $H_2(M;\mathbb{Z}/2\mathbb{Z})$ defined canonically from the natural quotient map from $\mathbb{Z}$ onto $\mathbb{Z}/2\mathbb{Z}$.
		\item ${q_f}_{\ast}(e_{\rm F})$ is not the zero element where we abuse the notation $q_f$ as before.
	\end{enumerate}
	Then the value of the 2nd Stiefel-Whitney class of $M$, which is the uniquely defined element of the cohomology group $H^2(M;\mathbb{Z}/2\mathbb{Z})$, must be the zero element $0 \in \mathbb{Z}/2\mathbb{Z}$ at some element ${e_{\rm F}}^{\prime} \in H_2(M;\mathbb{Z}/2\mathbb{Z})$ satisfying ${q_f}_{\ast}({e_{\rm F}}^{\prime})={q_f}_{\ast}(e_{\rm F})$.
\end{Thm}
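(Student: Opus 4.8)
The strategy is to express $w_2(M)$ as a pullback $q_f^{\ast}(w_2(E))$ of the second Stiefel--Whitney class of a rank-$2$ real bundle $E$ over $W_f$, and then to exploit the fact that, $W_f$ being simply-connected, $H^2(W_f;\mathbb{Z})$ is torsion-free, so that the pairing against ${q_f}_{\ast}$-images of torsion classes of $M$ is forced to vanish.

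First I would build the auxiliary bundle. Since here $m-n=6-5=1$, Proposition~\ref{prop:4}, together with (\ref{prop:4.5}) and the relation $q_f=r_f\circ i_M$ in (\ref{prop:4.4}), supplies a smooth compact connected $7$-manifold $W$ with $\partial W=M$ and a smooth fibre bundle $r_f\colon W\to W_f$ with fibre $D^{2}$. As $D^{2}$ is contractible, $r_f$ is a homotopy equivalence, so every real vector bundle on $W$ is a pullback from $W_f$; in particular the rank-$2$ subbundle $V=\ker(dr_f)\subset TW$ tangent to the fibres satisfies $V\cong r_f^{\ast}E$ for some rank-$2$ real bundle $E\to W_f$. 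Because $\bar f\colon W_f\to{\mathbb{R}}^5$ is an immersion of a $5$-manifold into ${\mathbb{R}}^5$, its differential is a fibrewise isomorphism, so $TW_f$ is trivial; and since $W_f$ is simply-connected, $H^1(W_f;\mathbb{Z}/2\mathbb{Z})=0$, whence $w_1(E)=0$, so $E$ is orientable and $w_2(E)\in H^2(W_f;\mathbb{Z}/2\mathbb{Z})$ is the mod-$2$ reduction of the Euler class $e(E)\in H^2(W_f;\mathbb{Z})$. For the smooth fibre bundle $r_f$ one has a splitting $TW\cong V\oplus r_f^{\ast}TW_f$; restricting to $M=\partial W$, whose normal line bundle in $W$ is trivial, and using $r_f\circ i_M=q_f$ and the triviality of $TW_f$, this gives $TM\oplus\epsilon^{1}\cong q_f^{\ast}E\oplus\epsilon^{5}$, hence $w(M)=q_f^{\ast}w(E)$ and in particular $w_2(M)=q_f^{\ast}w_2(E)$. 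Consequently, for every $x\in H_2(M;\mathbb{Z}/2\mathbb{Z})$ the number $\langle w_2(M),x\rangle=\langle w_2(E),{q_f}_{\ast}(x)\rangle$ depends only on ${q_f}_{\ast}(x)$.

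Next I would evaluate on $e_{\rm F}$. By hypothesis $e_{\rm F}={\phi}_{G,\mathbb{Z},\mathbb{Z}/2\mathbb{Z}}(\tilde e)$ for a torsion element $\tilde e\in G_{\rm Finite}\subset H_2(M;\mathbb{Z})$, and ${\phi}_{G,\mathbb{Z},\mathbb{Z}/2\mathbb{Z}}$ is the restriction of the reduction homomorphism, so ${q_f}_{\ast}(e_{\rm F})$ is the mod-$2$ reduction of the torsion class ${q_f}_{\ast}(\tilde e)\in H_2(W_f;\mathbb{Z})$. Since $W_f$ is simply-connected, the universal coefficient theorem gives $H^2(W_f;\mathbb{Z})\cong\operatorname{Hom}(H_2(W_f;\mathbb{Z}),\mathbb{Z})$ (the $\operatorname{Ext}$-term vanishes), so $\langle e(E),{q_f}_{\ast}(\tilde e)\rangle=0$; reducing mod $2$ and using naturality of the evaluation pairing yields $\langle w_2(M),e_{\rm F}\rangle=\langle w_2(E),{q_f}_{\ast}(e_{\rm F})\rangle=0$. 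Thus $w_2(M)$ already vanishes on $e_{\rm F}$ itself, and, since by the previous paragraph the pairing depends only on the ${q_f}_{\ast}$-image, it vanishes on every ${e_{\rm F}}^{\prime}$ with ${q_f}_{\ast}({e_{\rm F}}^{\prime})={q_f}_{\ast}(e_{\rm F})$, which is the assertion.

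The step I expect to need the most care is the identity $w_2(M)=q_f^{\ast}w_2(E)$: one must be sure that the bundle of Proposition~\ref{prop:4} is genuinely smooth in the range $m-n=1$ (this is (\ref{prop:4.5})), that $r_f\circ i_M$ equals $q_f$ exactly (this is (\ref{prop:4.4})), that the normal bundle of $\partial W=M$ in $W$ is trivial, and that the fibrewise tangent bundle splits off $TW$. Granting these standard facts, the remainder is formal bookkeeping with characteristic classes and the universal coefficient theorem; note also that the same computation incidentally yields $w_1(M)=0$ and $w_j(M)=0$ for all $j\ge 3$.
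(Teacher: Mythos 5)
Your proof is correct, and it takes a genuinely different route from the paper's. The paper argues locally: it represents ${q_f}_{\ast}(e_{\rm F})$ by a smoothly embedded $2$-sphere in ${\rm Int}\ W_f$, observes that the internal circle bundle of $f$ restricted to that sphere is trivial (its Euler number is the evaluation of an integral cohomology class on a torsion homology class, hence zero), uses a section to produce an embedded $2$-sphere in $M$ representing some ${e_{\rm F}}^{\prime}$ with ${q_f}_{\ast}({e_{\rm F}}^{\prime})={q_f}_{\ast}(e_{\rm F})$ and with trivial normal bundle, and then reads off the vanishing of $\langle w_2(M),{e_{\rm F}}^{\prime}\rangle$ from the triviality of $TW_f$; this uses only the bundle decomposition of Proposition \ref{prop:2} and never the manifold $W$. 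You instead work globally: from the smooth $D^2$-bundle $r_f:W\rightarrow W_f$ of Proposition \ref{prop:4} (legitimate here since $m-n=1$, by (\ref{prop:4.5})), you split $TW$ as the vertical bundle plus $r_f^{\ast}TW_f$, restrict to $M=\partial W$, and obtain the stable identity $TM\oplus{\epsilon}^1\cong q_f^{\ast}E\oplus{\epsilon}^5$, hence $w(M)=q_f^{\ast}w(E)$ with $E$ an orientable rank-$2$ bundle, after which the statement reduces to the fact that integral cohomology annihilates torsion homology. Your version buys more: it shows $w_2(M)$ vanishes on $e_{\rm F}$ itself, so the theorem's weaker ``some ${e_{\rm F}}^{\prime}$'' is automatic; it makes transparent that the pairing depends only on the ${q_f}_{\ast}$-image; and it yields $w_1(M)=0$ and $w_j(M)=0$ for $j\geq 3$ as a by-product, anticipating Theorem \ref{thm:4} and Main Theorem \ref{mthm:4}. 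The one point to keep honest --- which you correctly flag --- is that $\ker(dr_f)$ is a genuine smooth rank-$2$ subbundle of $TW$ up to and including the boundary; that is exactly what Proposition \ref{prop:4} (\ref{prop:4.5}) supplies in the range $m-n\leq 3$, whereas the paper's local argument avoids relying on this smoothing altogether.
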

\begin{proof}
	We abuse the notation $q_f$ and $W_f$ as before.
	$M$ and $W_f$ are simply-connected from the assumption and Theorem \ref{thm:2} (\ref{thm:2.1}). This means that $e_{\rm F} \in H_2(M;\mathbb{Z}/2\mathbb{Z})$ is represented by a smoothly embedded $2$-dimensional sphere in $M$ and that $e_{W_f,{\rm F}}:={q_f}_{\ast}(e_{\rm F}) \in H_2(W_f;\mathbb{Z}/2\mathbb{Z})$ is represented by a smoothly embedded $2$-dimensional sphere in ${\rm Int}\ W_f$. If we restrict an internal smooth bundle of $f$ to the smoothly embedded $2$-dimensional sphere in ${\rm Int}\ W_f$, then it must be trivial. Note that $q_f$ can be regarded as the projection of this bundle over the $2$-dimensional sphere (after the restriction). This is due to the assumption that $W_f$ and $M$ can be oriented and a well-known classification theorem of smooth (linear) bundles whose fibers are circles. In short, smooth linear bundles over a fixed space whose fibers are circles and whose structure groups are reduced to ones consisting of orientation preserving liner transformations, or the 2-dimensional rotation group, isomorphic to $S^1$ as a Lie group, are classified by the 2nd cohomology group of the base space whose coefficient ring is $\mathbb{Z}$.
	$W_f$ is a $5$-dimensional compact manifold smoothly immersed into ${\mathbb{R}}^5$ and this means that the tangent bundle is a trivial linear bundle. $e_{W_f,{\rm F}}$ is not the zero element. 
	By virtue of fundamental properties of Stiefel-Whitney classes, presented systematically in \cite{milnorstasheff} for example, and the tangent bundles of several manifolds such as $W_f$, $M$, and the $2$-dimensional sphere $S^2$, this completes the proof.
\end{proof}
\begin{Thm}
	\label{thm:4}
		Assume that a $6$-dimensional closed and simply-connected manifold $M$ admits a special generic map $f:M \rightarrow {\mathbb{R}}^5$ such that the 2nd homology group $H_2(W_f;\mathbb{Z})$ of $W_f$ is finite where we abuse $W_f$ as before. Then we have the following three.
		\begin{enumerate}
			\item \label{thm:4.1}
			 The 2nd Stiefel-Whitney class of $M$, which is the uniquely determined element of $H^2(M;\mathbb{Z}/2\mathbb{Z})$, is the zero element.
			\item \label{thm:4.2}
			The 1st Pontrjagin class of $M$, which is the uniquely determined element of $H^4(M;\mathbb{Z})$ where $M$ is oriented in an arbitrary way, is the zero element.
			\item \label{thm:4.3}
			The cup product of any two elements of $H^2(M;\mathbb{Z})$ is the zero element.
		\end{enumerate}
	
\end{Thm}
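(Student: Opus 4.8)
The plan is to extract, from the structural results of Section~2, enough geometric control of $W_f$ and of the internal smooth bundle of $f$ to force the three conclusions. First I would record the standing consequences of Theorem~\ref{thm:2}~(\ref{thm:2.1}): since $f:M\to\mathbb{R}^5$ and $m-n=1$, the map ${q_f}_{\ast}$ is an isomorphism on $H_j$ and $\pi_j$ for $j=0,1$, so $W_f$ is compact, connected and simply-connected; moreover $W_f$ is smoothly immersed in $\mathbb{R}^5$, hence its tangent bundle is stably trivial, and in fact trivial since $W_f$ has a handle decomposition with handles of index $\le 2$ (it is simple homotopy equivalent to a $4$-dimensional polyhedron). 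The hypothesis is that $H_2(W_f;\mathbb{Z})$ is finite; combined with simple-connectivity and Poincar\'e--Lefschetz duality for the $5$-manifold $W_f$, a standard computation should show $H_2(W_f;\mathbb{Z})$ is actually trivial (a simply-connected $5$-manifold with boundary that collapses to a $4$-complex and has finite $H_2$ has $H_2$ torsion-free by duality, hence zero). This is the place where I would be careful: I must check the duality bookkeeping with the boundary $\partial W_f$ present, but the collapsing statement in Proposition~\ref{prop:4} together with $W$ being built from $M$ by handles of index $\ge 2$ should pin it down. Once $H_2(W_f;\mathbb{Z})=0$, the universal coefficient theorem gives $H^2(W_f;A)=0$ and $H_2(W_f;A)=0$ for every coefficient ring $A$ of interest.

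Next I would transport this vanishing to $M$ via the bundle structure of Proposition~\ref{prop:2}~(\ref{prop:2.4}). Write $M$ as the union of ${q_f}^{-1}(N(\partial W_f))$, a $D^2$-bundle over $\partial W_f$ (the boundary linear bundle), and ${q_f}^{-1}(W_f-\mathrm{Int}\,N(\partial W_f))$, an $S^1$-bundle over the $4$-complex $W_f-\mathrm{Int}\,N(\partial W_f)$ (the internal smooth bundle, which is linear with structure group $SO(2)\cong S^1$ as explained in the proof of Main Theorem~\ref{mthm:1}). Because $H^2$ of the base vanishes, the internal $S^1$-bundle has trivial Euler class and hence is trivial; likewise the boundary $D^2$-bundle over $\partial W_f$ must be controlled. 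The upshot is a Mayer--Vietoris / Gysin computation: from the trivial circle bundle over a $4$-complex that is a homotopy retract of $W_f$, together with the collar piece, one obtains $M$ (up to the relevant homotopy data) as built from $W_f\times S^1$-type pieces. Concretely I would use the Gysin sequence of the internal circle bundle: with trivial Euler class it splits, $H^{\ast}(M')\cong H^{\ast}(W_f')\otimes H^{\ast}(S^1)$ as modules, where $M'={q_f}^{-1}(W_f')$ and $W_f'=W_f-\mathrm{Int}\,N(\partial W_f)$; since $H^2(W_f';A)=0$ this already kills $H^2(M';A)$ up to contributions of $H^1(W_f';A)$, which also vanishes by simple connectivity. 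Patching with the collar via Mayer--Vietoris then gives $H^2(M;A)=0$ (or at least that every class there dies under the relevant maps), which immediately forces conclusion~(\ref{thm:4.3}): all cup products of $2$-dimensional classes vanish trivially because $H^2(M;\mathbb{Z})$ itself is trivial.

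For conclusions~(\ref{thm:4.1}) and~(\ref{thm:4.2}) I would argue through the tangent bundle of $M$. The relation $f=\bar f\circ q_f$ with $\bar f$ an immersion into $\mathbb{R}^5$ means $TM$ is stably isomorphic to ${q_f}^{\ast}TW_f$ plus a line bundle along the fibers of the internal bundle plus a complement; more precisely, on $M'$ one has $TM'\cong {q_f}^{\ast}TW_f' \oplus {q_f}^{\ast}(\text{fiber-tangent line})$, and $TW_f'$ is trivial as noted. Thus the total Stiefel--Whitney class and Pontrjagin classes of $TM$ are pulled back from classes on $W_f$ (on the collar piece a parallel argument using the boundary linear bundle applies), and since $H^{\ast}(W_f;\mathbb{Z}/2\mathbb{Z})$ and $H^{\ast}(W_f;\mathbb{Z})$ vanish in the relevant positive degrees, $w_2(M)=0$ and $p_1(M)=0$. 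Here I would cite the standard Whitney-sum formulae and the computation of characteristic classes of pulled-back and trivial bundles as in \cite{milnorstasheff}. The main obstacle I anticipate is the clean handling of the boundary collar piece ${q_f}^{-1}(N(\partial W_f))$: the internal bundle is defined over $W_f'$ only, so one must glue the two characteristic-class computations along $N(\partial W_f)-\partial W_f$ using the compatibility condition between the boundary linear bundle and the restriction of the internal bundle (Proposition~\ref{prop:3}, third bullet), and confirm that no characteristic class sneaks in from the gluing. Once the vanishing $H_2(W_f;\mathbb{Z})=0$ is established, however, both pieces are ``as trivial as possible'' and the gluing should present no genuine difficulty, only bookkeeping.
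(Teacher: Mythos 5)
Your treatment of conclusions (\ref{thm:4.1}) and (\ref{thm:4.2}) is essentially the paper's: finiteness of $H_2(W_f;\mathbb{Z})$ plus simple connectivity gives $H^2(W_f;\mathbb{Z})=0$ by universal coefficients, so the internal circle bundle and the boundary linear bundle are trivial, $TW_f$ is trivial since $W_f$ immerses in ${\mathbb{R}}^5$, and the Whitney-sum formula kills $w_2(M)$ and $p_1(M)$. But two of your intermediate claims are false, and the second one destroys your proof of (\ref{thm:4.3}). First, finite $H_2(W_f;\mathbb{Z})$ does \emph{not} imply $H_2(W_f;\mathbb{Z})=0$: the paper itself (proof of Main Theorem \ref{mthm:3}) constructs compact simply-connected $5$-manifolds $W_f$, immersible in ${\mathbb{R}}^5$, with $H_2(W_f;\mathbb{Z})$ isomorphic to an arbitrary finite abelian group. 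Lefschetz duality in the presence of boundary does not give the torsion-freeness you assert. This error is harmless for (\ref{thm:4.1})--(\ref{thm:4.2}), since only $H^2(W_f;\mathbb{Z})=0$ is needed there, but it signals that the duality bookkeeping you flagged as delicate is in fact where things go wrong.

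Second, and fatally for (\ref{thm:4.3}), the conclusion $H^2(M;A)=0$ is simply not true. Take $W_f=S^4\times D^1$ with both bundles trivial: then $H_2(W_f;\mathbb{Z})=0$ and $M\cong S^2\times S^4$, yet $H^2(M;\mathbb{Z})\cong\mathbb{Z}$. The classes you lose track of are created exactly in the Mayer--Vietoris patching you defer: when $\partial W_f$ has $l$ components, $H^1$ of the overlap region contributes a free summand of rank $l-1$ to $H^2(M;\mathbb{Z})$, and this is precisely what the paper's two exact sequences compute (rank of $H_2(M;\mathbb{Z})$ equal to $l-1$, dual to $H_1(W_f,\partial W_f;\mathbb{Z})$). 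Consequently (\ref{thm:4.3}) cannot be obtained ``for free'' from vanishing of $H^2(M;\mathbb{Z})$; it needs a genuine geometric argument, which is the heart of the paper's proof and is entirely missing from yours: one represents a basis of $H_4(M;\mathbb{Z})$ by the connected components of the singular set of $f$ (whose normal bundles in $M$ are trivial, being restrictions of the trivial boundary linear bundle), identifies the corresponding degree-$2$ cohomology classes as their Poincar\'e duals (represented by the $2$-spheres ${q_f}^{-1}(L_j)$ over arcs joining boundary components of $W_f$), and pushes each singular-set component off itself to see that all pairwise cup products in $H^2(M;\mathbb{Z})$ vanish. Without this step your argument proves nothing about the cup product structure.
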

\begin{proof}
We abuse the notation in Propositions and
(Main) Theorems in the present paper. 
For example we also abuse "$W$" in Proposition \ref{prop:4} here.

We show (\ref{thm:4.1}) and (\ref{thm:4.2}). $H_2(W_f;\mathbb{Z})$ is finite. The classification theory of circle bundles presented in the proof of Theorem \ref{thm:3}
implies that an internal smooth bundle of $f$ and a boundary linear bundle of $f$ are regarded as trivial bundles here. $W_f$ is, as presented in the proof of Theorem \ref{thm:3}, has the trivial tangent bundle.

By fundamental arguments on characteristic classes, we have (\ref{thm:4.1}) and (\ref{thm:4.2}).

Proposition 3.10 of \cite{saeki} shows a useful homology exact sequence and we use another homology exact sequence. Each "$\cong$" between two groups means they are isomorphic. $0$ denotes the zero element of course. ${\partial}_{\ast}$ denotes the boundary homomorphism for an exact sequence. 

$M$ and $W_f$ are simply-connected from the assumption and Theorem \ref{thm:2} (\ref{thm:2.1}).

Let $A$ be an arbitrary commutative ring.
The exact sequence in \cite{saeki} is

$$\xymatrix{ H_3(W;A) \ar[r]& H_3(W,M;A) \cong H^4(W;A) \cong H^4(W_f;A) \ar[r]^{{\partial}_{\ast}}& \\
	H_2(M;A) \ar[r]^{{i_M}_{\ast}} & H_2(W;A) \cong H_2(W_f;A) \ar[r] & H_2(W,M;A) \cong H^5(W;A) \cong H^5(W_f;A) \cong \{0\} \ar[r]&}
$$
and we also have another homology exact sequence	
$$\xymatrix{ H_2(\partial W_f;A) \ar[r]& H_2(W_f,\partial W_f;A) \ar[r]^{{\partial}_{\ast}}& \\
	H_1(\partial W_f;A) \ar[r] & H_1(W_f;A) \cong \{0\} \ar[r] & H_1(W_f,\partial W_f;A) \cong H^4(W_f;A) \ar[r]& \\
	H_0(\partial W_f;A) \cong A^l:={\oplus}_{j=1}^l A \ar[r]& H_0(W_f;A) \cong A \ar[r]& \{0\}&}
$$
where we apply Poincar\'e duality theorem for compact and simply-connected manifolds $W$ and $W_f$ and Proposition \ref{prop:4} (\ref{prop:4.2}) for several isomorphisms of groups (modules over $A$) for example. $A^l:={\oplus}_{j=1}^{l} A$ denotes the direct sum of exactly $l>0$ copies of $A$ where $l$ is the number of connected components of the singular set of $f$, diffeomorphic to $\partial W_f$ by Proposition \ref{prop:2} (\ref{prop:2.3}).
Let $A:=\mathbb{Z}$. $H_2(W;\mathbb{Z})$ is finite. The rank of $H_2(M;\mathbb{Z})$ is at most $l-1$ since $H^4(W_f;\mathbb{Z}) \cong H^4(W;\mathbb{Z}) \cong H_1(W_f;\partial W_f;A)$ are of rank $l-1$ by the second sequence together with Poincar\'e duality theorem for $W_f$ and Proposition \ref{prop:4} (\ref{prop:4.2}). $H_4(W_f;\mathbb{Z})$ is isomorphic to $H^1(W_f;\partial W_f;\mathbb{Z})$ and $H_1(W_f;\partial W_f;\mathbb{Z})$, free and of rank $l-1$ by Poincar\'e duality theorem for $W_f$. Furthermore,
$H_1(W_f,\partial W_f;\mathbb{Z})$ has a basis each homology class of which is represented a $1$-dimensional connected manifold $L_j$ diffeomorphic to a closed interval enjoying the following properties.
\begin{itemize}
	\item $L_j$ is smoothly embedded in $W_f$.
	\item The boundary $\partial L_j$ consists of exactly two points and is in the boundary $\partial W_f$. The interior ${\rm Int}\ L_j$ is in the interior ${\rm Int}\ W_f$.
	\item ${q_f}^{-1}(L_j)$ can be seen as the $2$-dimensional sphere $S_{L_j}$ of the domain of a Morse function with exactly two singular points.
	\item There exists a connected component $C_0$ of $\partial W_f$ and it contains exactly one point in the boundary $\partial L_j$ for every $j$.
 	\item For each of any pair of distinct manifolds $L_{j_1}$ and $L_{j_2}$, choose a suitable one point in the boundary. Then they are in distinct connected components of $\partial W_f$ which are not the connected component $C_0$ before.
\end{itemize}
By fundamental arguments on Poincare duality theorem or so-called intersection theory, we have the following facts.
\begin{itemize}
	\item $H_4(M;\mathbb{Z})$ is of rank $l-1$.
	\item $H_4(M;\mathbb{Z})$ is free since $M$ is closed and simply-connected. More precisely, $H^2(M;\mathbb{Z})$ must be free and $H_4(M;\mathbb{Z})$ is isomorphic to it by Poincar\'e duality theorem.
			\item $H_4(M;\mathbb{Z})$ has a suitable basis each of which is represented by some connected component of the singular set of $f$, mapped to $\partial W_f$ by the restriction of $f$ there. In addition, remember that $q_f$ maps the singular set by a diffeomorphism onto $\partial W_f$ by Proposition \ref{prop:2} (\ref{prop:2.3}).
				\item For each element of the basis of $H_4(M;\mathbb{Z})$ before, the Poincar\'e dual to it can be regarded as an element represented by each sphere $S_{L_j} \in M$ before.
			\end{itemize}
		
	   By fundamental arguments in \cite{saeki} or easy observations, the singular set of the map $f$ is a $4$-dimensional closed and orientable manifold and its normal bundle (in $M$) is regarded as $2$-dimensional linear bundle and a bundle equivalent to the restriction of a boundary linear bundle and as a result it is trivial. We consider the Poincar\'e duals to homology classes represented by spheres in $\{S_{L_j}\} \subset M$. These Poincar\'e duals are represented by connected components of the singular set of $f$. We perturb a submanifold here or each connected component of the singular set so that the resulting submanifold and the original submanifold are distinct. This means that the cup product of any two elements of $H^2(M;\mathbb{Z})$ is the zero element. Such arguments have been in \cite{kitazawa4,kitazawa5} for example and will be used in our proof of Theorem \ref{thm:5} for example. This completes the proof (\ref{thm:4.3}).
		
		 \end{proof}
\begin{proof}[A proof of Main Theorem \ref{mthm:2}]
Suppose that a $6$-dimensional closed and simply-connected manifold $M$ admits a special generic map $f:M \rightarrow {\mathbb{R}}^5$ such that $H_2(W_f;\mathbb{Z})$ is trivial.
	Suppose also that $H_2(M;\mathbb{Z})$ is not free. Then there exists an element of a finite order of $H_2(M;\mathbb{Z})$ which is not the zero element. 
	Every connected component of the singular set of $f$ is mapped onto some connected component of the boundary of $\partial W_f$ by $q_f$ and a diffeomorphism onto the connected component. $H_4(W_f;\mathbb{Z})$ is free by the argument in the proof of Theorem \ref{thm:4} for example. 
	By a fundamental argument on intersection theory, this element is represented by a $2$-dimensional sphere smoothly embedded in $M$ and we can regard that this and any  connected component of the singular set of $f$, which is mapped onto some connected component of the boundary $\partial W_f \subset W_f$ by $q_f$ and a diffeomorphism onto the connected component, do not intersect.  Thus $H_2(M;\mathbb{Z})$ is free.
	
	From Theorem \ref{thm:4}, if a $6$-dimensional closed and simply-connected manifold $M$ admits a special generic map $f:M \rightarrow {\mathbb{R}}^5$ such that $H_2(W_f;\mathbb{Z})$ is trivial, then $M$ must be a manifold in Main Theorem \ref{mthm:1} by virtue of classifications of $6$-dimensional closed and simply-connected manifolds as before. In addition, by the fact that the 2nd Stiefel-Whitney class of $M$ is the zero element, the linear bundles over $S^2$ must be trivial and the total spaces must be diffeomorphic to $S^2 \times S^4$.

	 It suffices to construct a special generic map into ${\mathbb{R}}^5$ on a desired $6$-dimensional manifold.
	
	We can prepare a compact and simply-connected smooth manifold represented as a boundary connected sum of copies of $S^3 \times D^2$ or $S^4 \times D^1$ and embed the manifold smoothly into ${\mathbb{R}}^5$. Applying Proposition \ref{prop:3} by taking the internal smooth bundle over the resulting manifold $W_f$ and the boundary linear bundle over $W_f$ as trivial bundles and checking the homology groups of the $5$-dimensional manifold $W_f$ and the resulting $6$-dimensional closed and simply-connected manifold $M$ complete the proof.
	
	We give another precise exposition. We consider the product map of a Morse function with exactly two singular points on $S^2$ and the identity map on $S^4$ and the product map of a canonical projection of the unit sphere $S^3$ into ${\mathbb{R}}^2$ and the identity map on $S^3$. By embedding the spaces of the targets suitably, we can regard these two naturally as special generic maps into ${\mathbb{R}}^5$ whose restrictions to the singular sets are embeddings. We can construct a special generic map on a general desired manifold here, represented as a connected sum of these manifolds, easily. Expositions on the construction have been presented in the end of the proof of Main Theorem \ref{mthm:1} for example.

	\end{proof}

\begin{Thm}[\cite{kitazawa5} ($A:=\mathbb{Z}$)]
	\label{thm:5}
	Let $p$ be a power of a prime $p_0$ and represented by $p={p_0}^l$ for some positive integer $l$.
	Let $A$ be the ring $\mathbb{Z}$ of all integers or the commutative ring of the form $\mathbb{Z}/p\mathbb{Z}$, which is of order $p={p_0}^l$ and represented as the natural quotient ring of order $p$. In a word, equivalently, the ring is isomorphic to $\mathbb{Z}$ or a finite field. 	
	If a closed and simply-connected manifold $M$ of dimension $m=6$ admits a special generic map $f:M \rightarrow {\mathbb{R}}^5$ whose singular set is connected, then the cup product $c_1 \cup c_2$ is zero for any pair $c_1,c_2 \in H^2(M;A)$ of cohomology classes.	
\end{Thm}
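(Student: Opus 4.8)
The plan is to prove that $q_f^{\ast}$ carries $H^2(W_f;A)$ isomorphically onto $H^2(M;A)$ while $H^4(W_f;A)=0$; since $q_f$ is continuous and hence $q_f^{\ast}$ is a homomorphism of cohomology rings, writing each $c_i\in H^2(M;A)$ as $c_i=q_f^{\ast}(d_i)$ with $d_i\in H^2(W_f;A)$ will give $c_1\cup c_2=q_f^{\ast}(d_1\cup d_2)$ with $d_1\cup d_2\in H^4(W_f;A)=\{0\}$, so $c_1\cup c_2=0$. This is the argument of \cite{kitazawa5} for $A=\mathbb{Z}$; the point here is only to check that it goes through verbatim over the finite coefficient rings as well.

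First I would record the homological inputs. Because $M$ is simply-connected and $f$ maps into $\mathbb{R}^5$, a connected non-closed manifold with no boundary, Theorem \ref{thm:2} (\ref{thm:2.1}) makes $W_f$ simply-connected, so $H_1(M;A)=H_1(W_f;A)=0$ for every coefficient ring $A$; moreover $W_f$ is smoothly immersed into $\mathbb{R}^5$, so its tangent bundle is trivial and $W_f$ is $\mathbb{Z}$-orientable, hence $A$-orientable for every commutative ring $A$. By Proposition \ref{prop:2} (\ref{prop:2.3}) the singular set of $f$ is carried diffeomorphically onto $\partial W_f$ by $q_f$, so the hypothesis that the singular set is connected says exactly that $\partial W_f$ is connected, i.e. the integer $l$ appearing in the exact sequences in the proof of Theorem \ref{thm:4} equals $1$. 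Feeding $l=1$ and $H_1(W_f;A)=0$ into the second of those sequences, the inclusion induces an isomorphism $H_0(\partial W_f;A)\cong A\to H_0(W_f;A)\cong A$, and exactness then forces $H_1(W_f,\partial W_f;A)=0$; Poincar\'e--Lefschetz duality for the compact $5$-manifold $W_f$ gives $H^4(W_f;A)\cong H_1(W_f,\partial W_f;A)=0$, and similarly $H_4(W_f;A)\cong H^1(W_f,\partial W_f;A)=0$ (the latter vanishing by the universal coefficient theorem, since $H_1(W_f,\partial W_f;A)=H_0(W_f,\partial W_f;A)=0$) and $H_5(W_f;A)\cong H^0(W_f,\partial W_f;A)=0$.

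Then I would invoke Proposition \ref{prop:4} to get the compact $7$-manifold $W$ with $\partial W=M$ which collapses onto $W_f$ by a map $r_f$ with $q_f=r_f\circ i_M$ for the inclusion $i_M:M\hookrightarrow W$, taking $W$ oriented by Proposition \ref{prop:4} (\ref{prop:4.6}) since $M$ and $\mathbb{R}^5$ are. The collapse $r_f$ is a homotopy equivalence, so $r_f^{\ast}:H^{\ast}(W_f;A)\to H^{\ast}(W;A)$ is an isomorphism. From the cohomology exact sequence of the pair $(W,M)$ together with Poincar\'e--Lefschetz duality for $W$ we get $H^2(W,M;A)\cong H_5(W;A)\cong H_5(W_f;A)=0$ and $H^3(W,M;A)\cong H_4(W;A)\cong H_4(W_f;A)=0$, so $i_M^{\ast}:H^2(W;A)\to H^2(M;A)$ is an isomorphism; composing, $q_f^{\ast}=i_M^{\ast}\circ r_f^{\ast}:H^2(W_f;A)\to H^2(M;A)$ is an isomorphism, and the vanishing of cup products follows as in the first paragraph. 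The one genuinely essential step is $H^4(W_f;A)=0$ (equivalently surjectivity of $q_f^{\ast}$ in degree $2$): this is exactly where the connectedness of the singular set, i.e. $l=1$, is used, and for $l>1$ the group $H^4(W_f;A)\cong H_1(W_f,\partial W_f;A)$ is free of rank $l-1$, so both the argument and the statement fail. The remaining concern is purely bookkeeping — one must make sure the duality isomorphisms and exact sequences above remain valid with coefficients in an arbitrary $A$, in particular in $\mathbb{Z}/p\mathbb{Z}$ — which is fine because $W_f$ and $W$ are $\mathbb{Z}$-orientable, hence $A$-orientable, for every such $A$.
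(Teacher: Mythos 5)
Your proof is correct, and it takes a genuinely different --- and, for this particular statement, shorter --- route than the paper's. Both arguments begin identically: connectedness of the singular set gives $\partial W_f$ connected, whence $H_1(W_f,\partial W_f;A)\cong H^4(W_f;A)\cong\{0\}$ and $q_f$ induces an isomorphism in (co)homology of degree $2$. At that point the paper switches to geometry: it chooses a basis of $H_2(M;A)$ represented by embedded $2$-spheres, passes to their Poincar\'e duals in the $5$-dimensional $W_f$, which have degree $3$, uses intersection theory ($3+3-5=1$) to represent the Poincar\'e dual of each cup product by preimages of arcs and circles in $W_f$, and finally shows these preimages are null-homotopic $2$-spheres in $M$. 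You instead observe that $q_f^{\ast}=i_M^{\ast}\circ r_f^{\ast}$ is a homomorphism of cohomology rings which is surjective in degree $2$ (via $H^3(W,M;A)\cong H_4(W;A)\cong H_4(W_f;A)\cong H^1(W_f,\partial W_f;A)=0$), so every product $c_1\cup c_2$ is pulled back from $H^4(W_f;A)=\{0\}$; this is airtight and bypasses the intersection-theoretic machinery entirely. What the paper's longer argument buys is generality: for $m>6$ and target ${\mathbb{R}}^{m-1}$ the group $H^4(W_f;A)\cong H_{m-5}(W_f,\partial W_f;A)$ need no longer vanish and $q_f^{\ast}$ need not be injective in degree $4$ (its kernel involves $H_{m-3}(W_f;A)$), so your shortcut is special to $m=6$, whereas the geometric argument is the one that extends to higher dimensions as the paper indicates via \cite{kitazawa4}. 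One cosmetic point: your closing remark that for $l>1$ ``the statement fails'' is only a heuristic about why the hypothesis is needed, not something your proof establishes or requires; it does not affect the correctness of the argument.
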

\begin{proof}[A proof of Theorem \ref{thm:5}]	
    We abuse the notation similarly. 
    $M$ and $W_f$ are simply-connected from the assumption and Theorem \ref{thm:2} (\ref{thm:2.1}).
    
    Let $A$ be an arbitrary commutative ring.
    We show an exact sequence 
    $$\xymatrix{ H_3(W;A) \ar[r]& H_3(W,M;A) \cong H^4(W;A) \cong H^4(W_f;A) \ar[r]^{{\partial}_{\ast}}& \\
    	H_2(M;A) \ar[r]^{{i_M}_{\ast}} & H_2(W;A) \cong H_2(W_f;A) \ar[r] & H_2(W,M;A) \cong H^5(W;A) \cong H^5(W_f;A) \cong \{0\} \ar[r]&}
    $$
    	and
    $$\xymatrix{ H_2(\partial W_f;A) \ar[r]& H_2(W_f,\partial W_f;A) \ar[r]^{{\partial}_{\ast}}& \\
    	H_1(\partial W_f;A) \ar[r] & H_1(W_f;A) \cong \{0\} \ar[r] & H_1(W_f,\partial W_f;A) \cong H^4(W_f;A) \ar[r]& \\
    	H_0(\partial W_f;A) \cong A \ar[r]& H_0(W_f;A) \cong A \ar[r]& \{0\}&}
    $$
    in our proof of Theorem \ref{thm:4} again where we apply Poincar\'e duality theorem for compact and simply-connected manifolds $W$ and $W_f$ for several isomorphisms of groups (modules over $A$) and Proposition \ref{prop:4} (\ref{prop:4.2}) for example.
        
	The last homomorphism from $H_0(\partial W_f;A)$ into $H_0(W_f;A)$ induced by the inclusion of $\partial W_f$ into $W_f$ is an isomorphism since $\partial W_f$ is connected by Proposition \ref{prop:2} (\ref{prop:2.3}). We have $H_1(W_f,\partial W_f;A) \cong H^4(W_f;A) \cong \{0\}$ by this exact sequence and Poincar\'e duality theorem for the compact, connected and orientable manifold $W_f$. We can see that 
	${i_{M}}_{\ast}:H_2(M;A) \rightarrow H_2(W;A)$ is an isomorphism from the first sequence together with Poincar\'e duality theorem for the compact, connected and orientable manifold $W$ and Proposition \ref{prop:4} (\ref{prop:4.2}).
	
	We set $A:=\mathbb{Z}/q\mathbb{Z}$, which is of order $q$, an arbitrary number represented as the power ${q_0}^{k}$ of some prime number $q_0$ and some positive integer $k>0$. We have a basis ${\mathcal{B}}_{M,q}$ of $H_2(M;\mathbb{Z}/q\mathbb{Z})$. Each element of ${\mathcal{B}}_{M,q} \subset H_2(M;\mathbb{Z}/q\mathbb{Z})$ is represented by a smoothly embedded $2$-dimensional sphere in $M$ mapped to $W_f$ enjoying the following two.
	\begin{itemize}
		\item The restriction of $q_f$ to the $2$-dimensional sphere is a smooth embedding.
		\item The intersection of the image of the $2$-dimensional sphere and $\partial W_f$ consists of finitely many points.
	\end{itemize}
	 
	 For each element $e_j \in {\mathcal{B}}_{M,q}$, we have the dual ${{q_f}_{\ast}(e_j)}^{\ast}$ to ${q_f}_{\ast}(e_j)$ as an element of $H^2(W_f;\mathbb{Z}/q\mathbb{Z})$ and its Poincar\'e dual ${\rm PD}({{q_f}_{\ast}(e_j)}^{\ast})$ as an element of $H_3(W_f,\partial W_f;\mathbb{Z}/q\mathbb{Z})$ (where $M$ and $W_f$ are oriented suitably). Note that ${q_f}_{\ast}:H_2(M;\mathbb{Z}/q\mathbb{Z}) \rightarrow H_2(W_f;\mathbb{Z}/q\mathbb{Z})$ is an isomorphism by virtue of the relation $q_f=r_f \circ i_M$ of Proposition \ref{prop:4} (\ref{prop:4.4}) and the fact that the ${i_{M}}_{\ast}:H_2(M;\mathbb{Z}/q\mathbb{Z}) \rightarrow H_2(W;\mathbb{Z}/q\mathbb{Z})$ and ${r_f}_{\ast}:H_2(W;\mathbb{Z}/q\mathbb{Z}) \rightarrow H_2(W_f;\mathbb{Z}/q\mathbb{Z})$ are isomorphisms. We have a basis of $H_2(W_f;\mathbb{Z}/q\mathbb{Z})$ canonically. As we do in the proof of Main Theorem 1 of \cite{kitazawa4}, we canonically have the Poincar\'e dual ${\rm PD}({e_j}^{\ast}) \in H_4(M;\mathbb{Z}/q\mathbb{Z})$ to the dual ${e_j}^{\ast} \in H^2(M;\mathbb{Z}/q\mathbb{Z})$ to $e_j$. More precisely, according to Proposition \ref{prop:4} (\ref{prop:4.4}), $r_f$ is the projection of a bundle over $W_f$ whose fiber is diffeomorphic to $D^{2}$ and we consider a kind of {\rm prism operators} or arguments on so-called {\it Thom classes} to obtain an element of $H_4(W,M;\mathbb{Z}/q\mathbb{Z})$ and take the value of the boundary homomorphism there to obtain a desired element.  
	 Here we consider the intersection theory for $M$ and $W_f$ as we do in the proof of this theorem of \cite{kitazawa4,kitazawa5}. $3$ is the degree of ${\rm PD}({{q_f}_{\ast}(e_j)}^{\ast})$, $5$ is the dimension of $W_f$, and we have $3+3-5=1$. From this, we may regard that the Poincar\'e dual to the cup product of two duals to elements of ${\mathcal{B}}_{M,q}$ is a sum of homology classes represented by the preimages of circles in ${\rm Int}\ W_f$ or $1$-dimensional compact and connected manifolds diffeomorphic to a closed interval in $W_f$. Moreover, for these $1$-dimensional compact and connected manifolds diffeomorphic to a closed interval, we may assume the following properties. Note that a similar situation appears in the proof of Theorem \ref{thm:4}.
	 \begin{itemize}
	 	\item The boundaries are embedded in $\partial W_f$.
	 	\item The interiors are embedded in ${\rm Int}\ W_f$.
	 	\item For the map $q_f$, the preimages are $2$-dimensional spheres where they are regarded as the manifolds of the domains of Morse functions with exactly two singular points.  
	\end{itemize}
	
	The preimages of the circles in ${\rm Int}\ W_f$ are regarded as the boundaries of the total spaces of trivial bundles over a copy of $D^2$ in ${\rm Int}\ W_f$ whose fibers are circles, given by $q_f$. Remember that $W_f$ is simply-connected and that its dimension is $5$ and sufficiently high.
	
	We investigate each $2$-dimensional sphere ${S^2}_{j^{\prime}}$, represented as the preimage of a closed interval in $W_f$ before.
    
	$W_f$ is simply-connected and $\partial W_f$ is connected. This implies the existence of a (smooth) homotopy $H_{f,j^{\prime}}:{S^2}_{j^{\prime}} \times [0,1] \rightarrow W_f$ from the composition of the original embedding of each $2$-dimensional sphere ${S^2}_{j^{\prime}}$ here into $M$ with $q_f$ to a constant map whose image is a one-point set in $\partial W_f$. For $0 \leq t \leq 1$, we can define $S_{{\rm H},f,j^{\prime}}(t)$ as the set of all points in ${S^2}_{j^{\prime}}$ where the pairs of the points and $t$ are mapped into $\partial W_f$ by this homotopy. We may assume the property $S_{{\rm H},f,j^{\prime}}(t_1) \subset S_{{\rm H},f,j^{\prime}}(t_2)$ for any $0<t_1<t_2<1$. The original embedding of the $2$-dimensional sphere ${S^2}_{j^{\prime}}$ into $M$ is shown to be (smoothly) null-homotopic. This completes the proof of Theorem \ref{thm:3} for $A:=\mathbb{Z}/q\mathbb{Z}$.
	
	Let $A:=\mathbb{Z}$. $H_2(M;\mathbb{Z})$ and $H_2(W_f;\mathbb{Z})$ are isomorphic as commutative groups as in the case where $A$ is a finite field before. They are isomorphic to the direct sum of a free commutative group $G_{\rm Free}$ and a finite commutative group $G_{\rm Finite}$. $G_{\rm Finite}$ is isomorphic to a direct sum of finitely many cyclic groups each of which is of order $p_{j^{\prime \prime}}={{p_{j^{\prime \prime}}}_0}^{l_{j^{\prime \prime}}}$ for a suitable prime ${p_{j^{\prime \prime}}}_0$ and a suitable integer $l_{j^{\prime \prime}}>0$. This is due to a fundamental classification theorem for finitely generated commutative groups. We (can) abuse $G_{\rm Free}$ and $G_{\rm Finite}$ for summands of suitable decompositions into internal direct sums of $H_2(M;\mathbb{Z})$ and $H_2(W_f;\mathbb{Z})$. We can define a similar basis ${\mathcal{B}}_M:=\{e_j\}_j \subset G_{\rm Free} \subset H_2(M;\mathbb{Z})$ for the summand $G_{\rm Free} \subset H_2(M;\mathbb{Z})$ and we can argue similarly. This is also similar to a main argument in the proof of Main Theorem \ref{mthm:1}. We have the Poincar\'e dual ${\rm PD}({e_j}^{\ast}) \in H_4(M;\mathbb{Z})$ as before where we abuse the previous notation. As in the proof of Main Theorem \ref{mthm:1}, this is represented by a $4$-dimensional, closed, connected and oriented manifold regarded as the manifold of the domain of a special generic map into a $3$-dimensional non-closed and orientable manifold with no boundary. This comes from the fact that 
    each homology class of $H_3(W_f,\partial W_f;\mathbb{Z})$ is represented by a $3$-dimensional compact, connected and oriented smooth manifold. This is also a very explicit case of \cite{thom}. We can argue as in the case where $A$ is a finite field to complete the proof of Theorem \ref{thm:3}. This also reviews some ingredients of the proof in \cite{kitazawa5}.

We can also prove Theorem \ref{thm:5} in the case where the dimension $m$ is greater than $6$. This is in \cite{kitazawa4}.
\end{proof}
\begin{proof}[A proof of Main Theorem \ref{mthm:3}]
	We abuse the notation similarly.
	According to our proof of Theorem \ref{thm:5}, the existence of a special generic map $f:M \rightarrow {\mathbb{R}}^5$ here yields the isomorphism $q_f:H_2(M;\mathbb{Z}) \rightarrow H_2(W_f;\mathbb{Z})$ together with Proposition \ref{prop:4} (\ref{prop:4.4}) and the relation $q_f=r_f \circ i_M$ for the map $r_f:W \rightarrow W_f$ giving the collapsing.
	
	The 2nd Stiefel-Whitney class of $M$ is an element of $H^2(M;\mathbb{Z}/2\mathbb{Z})$ and Theorem \ref{thm:4} implies that it is zero. Furthermore, the 1st Pontrjagin class of $M$, which is oriented in an arbitrary way, is an element in $H^4(M;\mathbb{Z})$ and zero by Theorem \ref{thm:4}. The topology and the differentiable structure of this $6$-dimensional, closed and simply-connected manifold $M$ are determined by an element $\gamma \in H^4(M;\mathbb{Z})$. Furthermore, $4\gamma$ and the 1st Pontrjagin class of (an arbitrary oriented) $M$ agree. This is due to general theory of classifications of $6$-dimensional, closed and simply-connected manifolds. Consult \cite{jupp,wall,zhubr,zhubr2} again. By the assumption on elements of finite orders of the homology group $H_2(M;\mathbb{Z})$, $\gamma$ must be zero. Note that $H^4(M;\mathbb{Z})$ is isomorphic to $H_2(M;\mathbb{Z})$ by virtue of Poincar\'e duality theorem.
	
	To complete the proof, it is sufficient to construct a special generic map $f:M \rightarrow {\mathbb{R}}^5$ on a $6$-dimensional closed and simply-connected manifold $M$ such that $H_2(M;\mathbb{Z})$ is isomorphic to an arbitrary finite commutative group $G$ and that $H_3(M;\mathbb{Z})$ is of rank $2l$ for an arbitrary non-negative integer $l \geq 0$. Note that $H_3(M;\mathbb{Z})$ is isomorphic to the direct sum of a free commutative group of rank $2l$ and a group isomorphic to $G$ by virtue of universal coefficient theorem and Poincar\'e duality theorem. 
	
 We show an exact sequence as in the proof of Theorem \ref{thm:5}
$$\xymatrix{ H_4(W;\mathbb{Z}) \cong H_4(W_f;\mathbb{Z}) \cong H^1(W_f;\partial W_f;\mathbb{Z}) \cong \{0\} \ar[r]&\\ H_4(W,M;A) \cong H^3(W;A) \cong H^3(W_f;A) \ar[r]^{{\partial}_{\ast}}& \\
	H_3(M;\mathbb{Z}) \ar[r]^{{i_M}_{\ast}} &\\ H_3(W;\mathbb{Z}) \cong H_3(W_f;\mathbb{Z}) \ar[r] &\\ H_3(W,M;\mathbb{Z}) \cong H^4(W;\mathbb{Z}) \cong H^4(W_f;\mathbb{Z}) \cong H_1(W_f,\partial W_f;\mathbb{Z}) \cong \{0\} \ar[r]&}
$$

where we abuse $W$ and apply some important propositions and theorems as before. 
The rank of $H_3(M;\mathbb{Z})$ is shown to be twice the rank of $H_3(W_f;\mathbb{Z})$.

	By a fundamental argument on handles in the smooth category, we easily have a $5$-dimensional, compact and simply-connected manifold $W_f$ in Proposition \ref{prop:3} satisfying the following two.
	\begin{itemize}
		\item The boundary $\partial W_f$ is not empty and it is connected.
		\item $H_2(W_f;\mathbb{Z})$ is isomorphic to $G$.
		\item $H_3(W_f;\mathbb{Z})$ is of rank $l$.
		
		\item $W_f$ collapses to a $3$-dimensional polyhedron and has the (simple) homotopy type of a $3$-dimensional polyhedron.
	
		\item The tangent bundle of $W_f$ is trivial and by well-known studies of smooth immersions by Hirsh for example, we can smoothly immerse $W_f$ into ${\mathbb{R}}^5$.
	\end{itemize}

The classification theory tells us that the rank of the $3$rd homology group of a $6$-dimensional closed and simply-connected topological manifold must be even where the coefficient ring is $\mathbb{Z}$. In addition, it also tells that the manifold is represented as a connected sum of another $6$-dimensional closed and simply-connected manifold whose 2nd homology group is finite and finitely many copies of $S^3 \times S^3$ where the coefficient ring is $\mathbb{Z}$. Furthermore, more rigorously, the connected sum here is considered in the topology category if the manifold is a topological manifold, in the PL category if the manifold is a PL manifold, and in the smooth category if the manifold is a smooth manifold.

By applying Proposition \ref{prop:3} where the internal smooth bundle and the boundary linear bundle over $W_f$ are trivial, we have a desired special generic map on a desired $6$-dimensional closed and simply-connected manifold $M$.

This completes our proof.
\end{proof}

For other explicit construction of a special generic map for Main Theorems \ref{mthm:2} and \ref{mthm:3} and on a $6$-dimensional closed and simply-connected manifold $M$ such that $H_j(M;\mathbb{Z})$ is not free, apply Proposition \ref{prop:3} with some $5$-dimensional compact and connected manifolds whose boundaries are connected and which we can smoothly immerse into ${\mathbb{R}}^5$ presented in \cite{kitazawa2,kitazawa3} or ones we can easily obtain from these examples.

\begin{proof}[A proof of Main Theorem \ref{mthm:4}]
	As the proofs of other (Main) Theorems, we abuse the notation.

	We prove (\ref{mthm:4.1}). For $m=5,6$, this is already shown as Theorem \ref{thm:1} (\ref{thm:1.3}) and Main Theorem \ref{mthm:1}. Let $m \geq 7$. As in the proof of Main Theorem \ref{mthm:1}, we can know that $W_f$ is a $4$-dimensional closed and simply-connected manifold smoothly immersed into ${\mathbb{R}}^4$. We can also know that ${q_f}_{\ast}:H_2(M;\mathbb{Z}) \rightarrow H_2(W_f;\mathbb{Z})$ is an isomorphism and these groups are free. Theorem \ref{thm:2} (\ref{thm:2.1}) also implies that ${q_f}_{\ast}:H_j(M;\mathbb{Z}) \rightarrow H_j(W_f;\mathbb{Z})$ is an isomorphism for $0 \leq j \leq m-4$.
	$W_f$ has the homotopy type of a $3$-dimensional polyhedron.
	Thus $H_j(M;\mathbb{Z})$ is the trivial group if $j \neq 0,2,3,m-3,m-2,m$ where we apply Poincar\'e duality theorem. It also follows that $H_3(M;\mathbb{Z})$ is free. 
	We can take a basis of $H_{j_0}(W_f;\mathbb{Z})$ for $j_0=2,3$. Each element of the basis is represented by a $j_0$-dimensional, closed, connected and oriented manifold smoothly embedded in ${\rm Int}\ W_f$. The Poincar\'e dual to the element is represented by a ($4-j_0$)-dimensional, compact, connected and oriented manifold smoothly embedded in $W_f$ satisfying the following condition: the boundary is embedded in the boundary $\partial W_f$ and the interior is embedded in the interior ${\rm Int}\ W_f$. The preimage ${q_f}^{-1}(Y)$ of the ($4-j$)-dimensional manifold $Y$ is regarded as the domain of a special generic map into a ($4-j_0$)-dimensional, non-closed and orientable manifold with no boundary by the definition and local properties on the structure of a special generic map. This is due to the related similar exposition in the proof of Main Theorem \ref{mthm:1}. As a result the special generic map is regarded as a one into ${\mathbb{R}}^{4-j_0}$ by Propositions \ref{prop:2} and \ref{prop:3}. The (oriented) manifold ${q_f}^{-1}(Y)$ bounds an oriented compact manifold $W$ by Proposition \ref{prop:4} (\ref{prop:4.1}). The manifold $W$ may not be smooth. However, it has no problem. The ($m-j_0$)-th homology class represented by this manifold is regarded as the Poincar\'e dual to the class $({q_f}^{\ast})^{-1}(e_Y)$ where $e_Y \in H_{j_0}(W_f;\mathbb{Z})$ denotes the original element of the basis. 
	
	$Y$ is embedded smoothly in $W_f$ with a trivial normal bundle by a explicit fundamental argument on linear bundles. As a result, the manifold ${q_f}^{-1}(Y)$ is, as a submanifold, embedded smoothly in $M$ with a trivial normal bundle. In fact, we need sophisticated theory of local properties of special generic maps here and more general smooth maps due to Thom for example: so-called Thom's isotopy theorem. ${q_f}^{-1}(Y)$ is a homotopy sphere or a manifold in Theorem \ref{thm:1} (\ref{thm:1.1}) by Reeb's theorem and Theorem \ref{thm:1} (\ref{thm:1.1}). Note also that the set of all $({q_f}^{\ast})^{-1}(e_Y)$ here obtained by considering every $e_Y$ is regarded as a basis of the group $H_{m-j_0}(M;\mathbb{Z})$.
	
	Thus by several fundamental arguments on linear bundles, the ($m-j_0$)-th Stiefel Whitney class of $M$ is the zero class. In the case where $m-j_0$ is divisible by $4$, for any oriented $M$, the ($\frac{m-j_0}{4}$)-th Pontrjagin class of $M$ is the zero class.

	Remember that a similar and more explicit fact has been proved in the proof of Main Theorem \ref{mthm:1} and that we adopt an exposition which is a bit different from the original one.	
	
		The 3rd Stiefel-Whitney class of $M$ is the zero element since $H_2(M;\mathbb{Z})$ is free. This is due to general theory of linear bundles. $H_j(M;\mathbb{Z})$ has been shown to be the trivial groups if $j \neq 0,2,3,m-3,m-2,m$. Corollary 3.18 of \cite{saeki} says that the $j$-th Stiefel-Whitney class of $M$ is the zero element for $j > m-4+1=m-3$ and that the $j$-th Pontrjagin class is the zero element for $j > \frac{m-4+1}{2}$ or $4j > 2(m-3)>m-3$. This completes the proof of (\ref{mthm:4.1}).
		
		We prove (\ref{mthm:4.2}). We construct special generic maps on $M^{7,0}$ into ${\mathbb{R}}^n$ for $n=4,5$. We can prepare a smoothly embedded copy of $S^2 \times D^{n-2} \subset {\mathbb{R}}^n$ and that of $S^3 \times D^{n-3}$ easily as we do in several proofs in the present paper. We apply construction as in the end of the proof of Main Theorem \ref{mthm:1} to obtain desired special generic maps for example.
		
		It follows from well-known explicit theory on linear bundles that there exists a linear bundle over $S^4$ whose fiber is the $3$-dimensional unit sphere and whose total space $M$ is a $7$-dimensional closed and simply-connected oriented manifold enjoying the following properties.
		\begin{itemize}
			\item $M$ is simply-connected and $H_j(M;\mathbb{Z})$ is isomorphic to $\mathbb{Z}$ for $j=0,3,4,7$ and the trivial group for $j=1,2,5,6$.
			\item The $1$st Pontrjagin class, defined uniquely as an element of $H^4(M;\mathbb{Z})$, is $4k$ times a generator of $H^4(M;\mathbb{Z})$.
			\end{itemize}
			By the structure of the bundle before and Proposition \ref{prop:3}, we have a special generic map $f$ over the $5$-dimensional manifold $W_f$, diffeomorphic to $S^4 \times D^1$.

		As a fundamental genaral fact, we note that reversing the orientation of a given oriented manifold changes the sign of the $1$st Pontrjagin class of the manifold.
			
			To obtain a desired family $\{M^{7,\lambda}\}_{\lambda \in \Lambda}$ and special generic maps on these manifolds into ${\mathbb{R}}^5$, we apply a similar method of construction. In other words, we use this new map and the previously presented maps into ${\mathbb{R}}^5$ whose images are copies of $S^2 \times D^3$ and $S^3 \times D^2$ smoothly embedded in ${\mathbb{R}}^5$. We consider connected sums of the $7$-dimensional manifolds in general.
			
			This completes the proof.

\end{proof}

\section{Final remarks.}
\begin{Rem}
	\label{rem:1}
Related to Theorem \ref{thm:1} (\ref{thm:1.2}), we can construct special generic maps into ${\mathbb{R}}^3$ on manifolds represented as connected sums of the total spaces of linear bundles over $S^2$ whose fibers are diffeomorphic to $S^k$ for $k \geq 2$. We can know this through the original paper. This is also an exercise. Nishioka's construction in the proof of Main Theorem \ref{mthm:1} is also regarded as a higher dimensional version.

\cite{saeki} also shows that a closed and simply-connected manifold whose dimension is greater than $3$ admitting a special generic map into ${\mathbb{R}}^3$ must be represented as a connected sum of the total spaces of smooth bundles over $S^2$ whose fibers are either of the following two (where the connected sum is considered in the smooth category). We have encountered arguments of this type in the present paper for several times.
\begin{itemize}
	\item A homotopy sphere whose dimension is greater than $1$ and not $4$.
	\item A $4$-dimensional standard sphere.
\end{itemize}
Note also that $W_f$ in Propositions \ref{prop:2} and \ref{prop:3} for a special generic map $f$ must be represented as a boundary connected sum of finitely many copies of $S^2 \times D^1$. Note also that in \cite{saeki}, so-called Poincare's conjecture for $3$-dimensional spheres was regarded as unsolved and that we use the affirmative answer here.  
\end{Rem}
\begin{Rem}
	\label{rem:2}
	Besides Main Theorem \ref{mthm:4}, we do not know variants of our Main Theorems for closed and simply-connected manifolds whose dimensions are greater than $6$. It seems to be difficult to find explicit classifications of closed and simply-connected manifolds of certain classes in general. Although there exist such classifications, it seems to be difficult to apply them. \cite{kreck} is a result for $7$-dimensional ones whose 2nd homology groups are free where the coefficient ring is $\mathbb{Z}$. We do not know how to use this to obtain similar results or variants of Main Theorems \ref{mthm:1}, \ref{mthm:2} and \ref{mthm:3}.
	
	It is well-known that the differentiable structures of homotopy spheres admitting special generic maps into Euclidean spaces whose dimensions are sufficiently high and lower than the dimensions of the homotopy spheres are strongly restricted. For this see \cite{calabi}, (section 4 of) \cite{saeki}, \cite{saeki2} and \cite{wrazidlo} for example. Theorem \ref{thm:1} (\ref{thm:1.2}) is also regarded as a related fact in the case where the dimensions of the manifolds of the domains are $m=4$. For related examples of $4$-dimensional manifolds which are homeomorphic to and not diffeomorphic to the manifolds, see also \cite{saekisakuma2} for example.
\end{Rem}

	Related to some of Remark \ref{rem:2} and our main study, \cite{kitazawa4,kitazawa5} mainly concern some other restrictions on cohomology rings of closed and simply-connected manifolds of dimensions at least $6$. Some arguments in the present study are due to them.

\section{Acknowledgement.}
\label{sec:4}
The author is a member of the project JSPS KAKENHI Grant Number JP17H06128 "Innovative research of geometric topology and singularities of differentiable mappings": Principal investigator is Osamu Saeki. The present study is supported by this project. We also declare that data supporting the present study essentially are all in the present paper.

\end{document}